\newtheorem{theorem}{Theorem}[section]
\newtheorem{proposition}[theorem]{Proposition}
\theoremstyle{definition}
\newtheorem{definition}[theorem]{Definition}
\newcommand{\T}{\mathbb{T}}
\newcommand{\R}{\mathbb{R}}
\newcommand{\Z}{\mathbb{Z}}
\newcommand{\C}{\mathbb{C}}
\author{Robert Schippa}
\title[Energy-critical NLS in modulation spaces]{Infinite-energy solutions to energy-critical nonlinear Schr\"odinger equations in modulation spaces}
\address{Karlsruhe Institute of Technology\\
Englerstrasse 2 \\
76131 Karlsruhe \\
Germany
}
\email{robert.schippa@kit.edu}
\begin{document}

\begin{abstract}
We prove new well-posedness results for energy-critical nonlinear Schr\"odinger equations in modulation spaces. This covers initial data with infinite mass and energy. The proof is carried out via bilinear refinements and adapted function spaces.
\end{abstract}

\maketitle

\section{Introduction}

In this paper we continue the study of modulation spaces as initial data for nonlinear Schr\"odinger equations started in \cite{Schippa2022}.
Modulation spaces in the present context are used to model initial data, which are decaying slower than functions in $L^2$-based Sobolev spaces. These spaces are natural because of their invariance under the linear Schr\"odinger evolution in sharp contrast with the $L^p$-based Sobolev spaces for $p \neq 2$. Modulation spaces were introduced by Feichtinger \cite{Feichtinger1983}; see also subsequent joint works with Gr\"ochenig \cite{FeichtingerGroechenig1989,FeichtingerGroechenig1989II,Groechenig2001}. The body of literature on modulation spaces is already huge, so we refer to \cite{Schippa2022,Chaichenets2018,WangHuoHaoGuo2011,BenyiOkoudjou2020} and references therein for an overview with an emphasis on the use of modulation spaces in the context of dispersive equations.

In the work \cite{Schippa2022} $L^p$-smoothing estimates in modulation spaces were considered:
\begin{equation}
\label{eq:LinearSmoothing}
\| e^{it \Delta} u_0 \|_{L^p([0,1],L^p(\R^d))} \lesssim \| u_0 \|_{M^s_{p,2}(\R^d)}.
\end{equation}

These turned out to be useful to prove well-posedness results for the cubic NLS
\begin{equation}
\label{eq:CubicNLS}
\left\{ \begin{array}{cl}
i \partial_t u + \Delta u &= \pm |u|^2 u, \quad (t,x) \in \R \times \R, \\
u(0) &= u_0 \in M^s_{p,2}(\R).
\end{array} \right.
\end{equation}
The solution was placed in Strichartz spaces, in which the linear part was estimated by \eqref{eq:LinearSmoothing} and the nonlinear part was iterated with Strichartz estimates.

By frequency localization and rescaling arguments, the estimates \eqref{eq:LinearSmoothing} followed from $\ell^2$-decoupling for the paraboloid due to Bourgain--Demeter \cite{BourgainDemeter2015}. Let $\mathcal{E}$ denote the Fourier extension operator for the (truncated) paraboloid:
\begin{equation*}
\mathcal{E} f(t,x) = \int_{\{ \xi \in \R^d : |\xi| < 1 \} } e^{i(x.\xi + t|\xi|^2)} f(\xi) d\xi.
\end{equation*}
Bourgain--Demeter proved the following estimates, which are sharp up to the $\varepsilon$-loss:
\begin{equation}
\label{eq:l2Decoupling}
\| \mathcal{E} f \|_{L^p(B_{d+1}(0,R))} \lesssim_{\varepsilon} R^{s_{\text{dec}}+\varepsilon} \big( \sum_\sigma \| \mathcal{E} f_\sigma \|_{L^p(w_{B_{d+1}(0,R)})}^2 \big)^{1/2}
\end{equation}
with $s_{\text{dec}}=s_{\text{dec}}(p,d)$ given by
\begin{equation*}
s_{\text{dec}}=
\begin{cases}
0, \qquad &2 \leq p \leq \frac{2(d+2)}{d}, \\
\frac{d}{4} - \frac{d+2}{2p}, &\frac{2(d+2)}{d} \leq p \leq \infty,
\end{cases}
\end{equation*}
and $f_\sigma$ denotes $f \cdot 1_{B(x_\sigma,R^{-1/2})}$ such that the family of $R^{-1/2}$-balls are finitely overlapping. In \cite{Schippa2022} was pointed out how the right-hand side is related to the modulation space norm of the initial value by rescaling and a kernel estimate. Thus, \eqref{eq:l2Decoupling} indeed gives \eqref{eq:LinearSmoothing} with $s > 2s_{\text{dec}}(p,d)$. It was also shown in \cite{Schippa2022} that $s \geq 2s_{\text{dec}}(p,d)$ is necessary for \eqref{eq:LinearSmoothing} to hold true.
\begin{theorem}[{$L^p$-smoothing estimates in modulation spaces,~\cite[Theorem~1.1]{Schippa2022}}]
\label{thm:LinearSmoothing}
Let $d \geq 1$ and $p \geq 2$. Then, \eqref{eq:LinearSmoothing} holds true for $s>2s_{\text{dec}}(p,d)$.
\end{theorem}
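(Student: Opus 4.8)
The plan is to deduce \eqref{eq:LinearSmoothing} from the $\ell^2$-decoupling inequality \eqref{eq:l2Decoupling} by combining a Littlewood--Paley decomposition in frequency with a parabolic rescaling that matches the unit-cube decomposition underlying the modulation norm to the $R^{-1/2}$-cap decomposition of the paraboloid. Throughout I write $\Box_k$ for the projection onto the unit frequency cube centred at $k \in \Z^d$, so that $\| u_0 \|_{M^s_{p,2}}^2 \sim \sum_{k} \langle k \rangle^{2s} \| \Box_k u_0 \|_{L^p}^2$, and $P_N$ for the projection onto the dyadic annulus $|\xi| \sim N$. The first reduction is to prove, for a single block $\widehat{u_0}$ supported in $\{ |\xi| \sim N \}$, the single-block bound
\begin{equation*}
\| e^{it\Delta} u_0 \|_{L^p([0,1] \times \R^d)} \lesssim_\varepsilon N^{2 s_{\text{dec}} + \varepsilon} \Big( \sum_{|k| \sim N} \| \Box_k u_0 \|_{L^p(\R^d)}^2 \Big)^{1/2},
\end{equation*}
and then to sum over dyadic $N$.

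For the single-block bound I would rescale $\xi = N \eta$, which sends $\{|\xi| \sim N\}$ to $\{|\eta| \sim 1\}$ and the unit cubes $Q_k$ to $N^{-1}$-balls. Setting $g(\eta) = \widehat{u_0}(N\eta)$ one has $e^{it\Delta} u_0(x) = N^d \mathcal{E} g(-N^2 t, N x)$, the sign of $t$ being immaterial for the $L^p$-norm by time reflection, so the change of variables $\tau = N^2 t$, $y = N x$ gives
\begin{equation*}
\| e^{it\Delta} u_0 \|_{L^p([0,1] \times \R^d)} = N^{d - (d+2)/p} \| \mathcal{E} g \|_{L^p([0,N^2] \times \R^d)},
\end{equation*}
and the analogous identity for each cube/cap. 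The time slab $[0,N^2]$ has thickness $R := N^2$, so I would cover it by balls $B_{d+1}(c,R)$, apply \eqref{eq:l2Decoupling} on each (noting $R^{-1/2} = N^{-1}$ is exactly the cap scale), and sum the resulting estimates: since $p \geq 2$, Minkowski's inequality together with the rapid decay of the weights $w_{B_{d+1}(c,R)}$ upgrades the ball-localized bound to a clean estimate on the slab, with the factor $R^{s_{\text{dec}}} = N^{2 s_{\text{dec}}}$ responsible for the doubling of the exponent in $s > 2 s_{\text{dec}}$.

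Undoing the rescaling on each cap, $\mathcal{E} g_\sigma$ corresponds to $e^{it\Delta} \Box_k u_0$ with $\langle k \rangle \sim N$, and the same change of variables produces a factor $N^{(d+2)/p - d}$ that cancels the one above. It remains to pass from $\| e^{it\Delta} \Box_k u_0 \|_{L^p([0,1] \times \R^d)}$ to $\| \Box_k u_0 \|_{L^p(\R^d)}$: writing $\zeta = \xi - k$, the symbol of $e^{it\Delta}$ on $Q_k$ factors as the constant $e^{-it|k|^2}$, a Galilean translation $x \mapsto x - 2tk$, and the multiplier $e^{-it|\zeta|^2} \chi(\zeta)$, whose $L^p$-multiplier norm is bounded uniformly in $t \in [0,1]$ and $k$ by a routine kernel estimate for this smooth, compactly supported symbol. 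This yields $\| e^{it\Delta} \Box_k u_0 \|_{L^p([0,1] \times \R^d)} \lesssim \| \Box_k u_0 \|_{L^p(\R^d)}$ and hence the single-block bound. To conclude I would sum over dyadic $N$: writing $s = 2 s_{\text{dec}} + \delta$ with $\delta > 0$ and choosing $\varepsilon < \delta$, the triangle inequality followed by Cauchy--Schwarz in $N$ absorbs the $\ell^1 \to \ell^2$ loss through the convergent geometric factor $\sum_N N^{2(\varepsilon - \delta)}$, leaving exactly $\| u_0 \|_{M^s_{p,2}}$.

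The deep input is \eqref{eq:l2Decoupling} itself, which I use as a black box; within the present argument the two points requiring care are the localization that transfers the ball-localized form of \eqref{eq:l2Decoupling} to the space-time slab while keeping the weights summable, and the kernel estimate certifying that the unit-frequency Schr\"odinger evolution is $L^p$-bounded uniformly in $t$ and $k$. I expect the latter, together with the bookkeeping of rescaling exponents, to be the main technical obstacle, whereas the conceptual heart --- and the source of the sharp constant $2 s_{\text{dec}}$ --- is the rescaling $R = N^2$ that aligns the modulation-space cubes with the decoupling caps.
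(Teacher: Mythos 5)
Your proposal is correct and follows essentially the same route as the paper: Theorem \ref{thm:LinearSmoothing} is quoted from \cite{Schippa2022}, and the strategy sketched there (and in this paper's introduction) is exactly yours --- dyadic frequency localization, parabolic rescaling with $R = N^2$ so that the unit-cube decomposition underlying the modulation norm matches the $R^{-1/2}$-caps of $\ell^2$-decoupling, the Bourgain--Demeter inequality \eqref{eq:l2Decoupling} used as a black box, and a Galilean/kernel estimate giving $\| e^{it\Delta} \Box_k u_0 \|_{L^p([0,1]\times\R^d)} \lesssim \| \Box_k u_0 \|_{L^p(\R^d)}$ uniformly in $k$, followed by dyadic summation using $s > 2s_{\text{dec}}(p,d)$. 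You also correctly identify the two points needing care (weight summation when passing from balls to the time slab, and the uniform multiplier bound), so the write-up matches the cited proof in both structure and substance.
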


 In the present work, we show bilinear refinements via Galilean invariance:
\begin{proposition}
\label{prop:BilinearRefinement}
Let $d \geq 3$, $\varepsilon > 0$, and $N_1, N_2 \in 2^{\mathbb{N}_0}$ with $N_2 \lesssim N_1$. Then, we find the following estimate to hold:
\begin{equation}
\label{eq:BilinearEstimate}
\| P_{N_1} e^{it \Delta} f_1 P_{N_2} e^{it \Delta} f_2 \|_{L^2_{t,x}([0,1] \times \R^d)} \lesssim_\varepsilon N_2^{\frac{d-2}{2}+\varepsilon} \| P_{N_1} f_1 \|_{M_{4,2}(\R^d)} \| P_{N_2} f_2 \|_{M_{4,2}(\R^d)}.
\end{equation}
\end{proposition}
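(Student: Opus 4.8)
My plan is to isolate the transversality gain into a single high-frequency factor by exploiting the frequency support of the low piece. Write $F_i=e^{it\Delta}P_{N_i}f_i$ and let $P^x_{\leq N_2}$ denote the projection onto spatial frequencies $\{|\xi|\lesssim N_2\}$. Since $|F_2|^2=F_2\overline{F_2}$ has spatial Fourier support in $A_2-A_2\subseteq\{|\xi|\lesssim N_2\}$, the self-adjointness of $P^x_{\leq N_2}$ gives $\|F_1F_2\|_{L^2_{t,x}}^2=\int P^x_{\leq N_2}(|F_1|^2)\,|F_2|^2$, whence Cauchy--Schwarz yields
\[
\|F_1F_2\|_{L^2_{t,x}}^2\leq\|P^x_{\leq N_2}(|F_1|^2)\|_{L^2_{t,x}}\,\|F_2\|_{L^4_{t,x}}^2 .
\]
The low factor is controlled by the linear $L^4$-smoothing: for $d\geq3$ one has $4\geq\frac{2(d+2)}{d}$, hence $2s_{\text{dec}}(4,d)=\frac{d-2}{4}$, so Theorem~\ref{thm:LinearSmoothing} together with $\|P_{N_2}f_2\|_{M^s_{4,2}}\sim N_2^{s}\|P_{N_2}f_2\|_{M_{4,2}}$ gives $\|F_2\|_{L^4_{t,x}}^2\lesssim_\varepsilon N_2^{\frac{d-2}{2}+\varepsilon}\|P_{N_2}f_2\|_{M_{4,2}}^2$. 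The whole estimate thus reduces to the high-frequency bilinear refinement
\[
\|P^x_{\leq N_2}(|F_1|^2)\|_{L^2_{t,x}([0,1]\times\R^d)}\lesssim_\varepsilon N_2^{\frac{d-2}{2}+\varepsilon}\|P_{N_1}f_1\|_{M_{4,2}}^2 .
\]

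For this remaining estimate I would decompose $\{|\xi|\sim N_1\}$ into cubes $\theta$ of side $N_2$. Since $F_{1,\theta}\overline{F_{1,\theta'}}$ has spatial frequency near $c_\theta-c_{\theta'}$, only the $O(1)$ neighbouring pairs $\theta\sim\theta'$ survive $P^x_{\leq N_2}$. On each piece I would use Galilean invariance to boost $\theta$ (respectively $\theta'$) to the origin; this preserves the $L^2_{t,x}$-norm and, because integer boosts merely permute the unit cubes, the modulation-space norm, reducing each factor to frequency scale $N_2$ about the origin, where Theorem~\ref{thm:LinearSmoothing} applies and gives $\|F_{1,\theta}\|_{L^4_{t,x}}\lesssim_\varepsilon N_2^{\frac{d-2}{4}+\varepsilon}\|f_{1,\theta}\|_{M_{4,2}}$. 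If the surviving pieces could then be summed with the sharp power rather than trivially, Hölder on each pair combined with $\sum_\theta\|f_{1,\theta}\|_{M_{4,2}}^2=\|P_{N_1}f_1\|_{M_{4,2}}^2$ would close the estimate with exactly the factor $N_2^{\frac{d-2}{2}}$.

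The main obstacle is precisely this summation. The trivial Hölder bound $\|F_1F_2\|_{L^2}\leq\|F_1\|_{L^4}\|F_2\|_{L^4}$ only produces the weaker power $N_1^{\frac{d-2}{4}}N_2^{\frac{d-2}{4}}$, so a genuine gain of $(N_2/N_1)^{\frac{d-2}{4}}$ must be extracted, and naive almost-orthogonality is unavailable: over $[0,1]$ the space--time frequency support of each $F_{1,\theta}$ is fuzzy (a side-$N_2$ cube at height $N_1$ subtends an interval of length $\sim N_1N_2$ in the time-frequency variable $\tau$), so the pieces $P^x_{\leq N_2}(F_{1,\theta}\overline{F_{1,\theta'}})$ overlap heavily in both $\tau$ and the spatial frequency and are \emph{not} orthogonal. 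Turning the wasteful $\ell^1$ overlap count, which would only give $N_2^{\frac{d-1}{2}}$, into the sharp $N_2^{\frac{d-2}{2}}$ therefore requires a bilinear/refined Strichartz input, i.e. re-injecting the curvature of the paraboloid through Bourgain--Demeter $\ell^2$-decoupling at the $N_2$-scale after the Galilean normalization. Establishing this refined estimate---equivalently, the displayed bound on $\|P^x_{\leq N_2}(|F_1|^2)\|_{L^2}$---is the crux, while the remaining bookkeeping of boosts and constants is routine.
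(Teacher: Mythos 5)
Your reduction is sound up to the last displayed estimate: the Cauchy--Schwarz step moving $P^x_{\leq N_2}$ onto $|F_1|^2$, the treatment of the low factor by Theorem \ref{thm:LinearSmoothing}, the observation that only $O(1)$ neighbouring pairs $\theta\sim\theta'$ survive the projection, and the per-piece bound $\|F_{1,\theta}\|_{L^4_{t,x}}\lesssim_\varepsilon N_2^{\frac{d-2}{4}+\frac{\varepsilon}{2}}\|f_{1,\theta}\|_{M_{4,2}}$ via an integer Galilean boost are all correct and are exactly the ingredients of the paper's argument. The gap is your final paragraph: you declare that summing the surviving pairs requires a further curvature/decoupling input at scale $N_2$ and leave the key estimate unproven. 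That assessment is mistaken --- the trivial triangle inequality already closes the argument:
\begin{align*}
\|P^x_{\leq N_2}(|F_1|^2)\|_{L^2_{t,x}} &\leq \sum_{\theta\sim\theta'}\|F_{1,\theta}\overline{F_{1,\theta'}}\|_{L^2_{t,x}}
\leq \sum_{\theta\sim\theta'}\|F_{1,\theta}\|_{L^4_{t,x}}\|F_{1,\theta'}\|_{L^4_{t,x}}\\
&\lesssim_\varepsilon N_2^{\frac{d-2}{2}+\varepsilon}\sum_{\theta\sim\theta'}\|f_{1,\theta}\|_{M_{4,2}}\|f_{1,\theta'}\|_{M_{4,2}}
\lesssim N_2^{\frac{d-2}{2}+\varepsilon}\sum_{\theta}\|f_{1,\theta}\|_{M_{4,2}}^2\\
&\lesssim N_2^{\frac{d-2}{2}+\varepsilon}\|P_{N_1}f_1\|_{M_{4,2}}^2,
\end{align*}
where the first line drops the $L^2$-bounded projection and uses H\"older, the passage to a single sum uses AM--GM together with the fact that each $\theta$ has $O(1)$ neighbours, and the last step is the $\ell^2$-structure of $M_{4,2}$ over unit frequency cubes (the finitely overlapping side-$N_2$ cubes satisfy $\sum_\theta\|f_{1,\theta}\|^2_{M_{4,2}}\lesssim\|P_{N_1}f_1\|^2_{M_{4,2}}$ --- the identity you yourself invoked). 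No orthogonality among the pairs is needed: because your Cauchy--Schwarz step placed \emph{both} high-frequency pieces in the same term, the per-pair bound is quadratic in the pieces, so the $\ell^1$-sum over the diagonal pairs is comparable to $\sum_\theta\|f_{1,\theta}\|^2$ and not to the number of pairs. Your claimed loss $N_2^{\frac{d-1}{2}}$ appears to be imported from the sharp Euclidean bilinear refinement $N_2^{\frac{d-1}{2}}N_1^{-\frac12}\|f_1\|_{L^2}\|f_2\|_{L^2}$; but \eqref{eq:BilinearEstimate} claims no $N_1^{-\frac12}$ gain, and the weaker exponent $\frac{d-2}{2}$ is precisely what Galilean invariance plus the scale-$N_2$ $L^4$ estimate deliver, with no curvature input beyond Theorem \ref{thm:LinearSmoothing} itself.

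For comparison, the paper avoids the detour through $|F_1|^2$ altogether: it decomposes only the high-frequency factor into balls $Q_{K'}$ of radius $N_2$ \emph{inside the product}. The pieces $Q_{K'}P_{N_1}e^{it\Delta}f_1\cdot P_{N_2}e^{it\Delta}f_2$ then have spatial Fourier supports in finitely overlapping $O(N_2)$-neighbourhoods of the centres of the $Q_{K'}$, hence are almost orthogonal in $L^2_x$; after this $\ell^2$-summation one applies H\"older and the Galilean boost plus smoothing to each factor, concluding with $\sum_{K'}\|Q_{K'}f_1\|^2_{M_{4,2}}\lesssim\|P_{N_1}f_1\|^2_{M_{4,2}}$. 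In that arrangement the almost orthogonality is genuinely needed (an $\ell^1$-sum of $\|Q_{K'}f_1\|_{M_{4,2}}$ would lose $(N_1/N_2)^{d/2}$); in yours it is not. Either way, the proposition follows from the ingredients you assembled: complete the summation as displayed above and your proof is done.
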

\noindent Bilinear refinements go back to Bourgain \cite{Bourgain1993,Bourgain1998}.

\medskip

Next, we apply bilinear Strichartz estimates in modulation spaces to extend the local well-posedness theory of nonlinear Schr\"odinger equations.
We consider the energy-critical nonlinear Schr\"odinger equation for $d \in \{3,4\}$:
\begin{equation}
\label{eq:EnergyCriticalNLS}
\left\{ \begin{array}{cl}
i \partial_t u + \Delta u &= \pm |u|^{\frac{4}{d-2}} u \quad (t,x) \in \R \times \R^d, \\
u(0) &= u_0 \in M^{1+\varepsilon}_{4,2}(\R^d).
\end{array} \right.
\end{equation}
The equation \eqref{eq:EnergyCriticalNLS} is energy critical because the scaling
\begin{equation*}
u(t,x) \to \lambda^{\frac{d-2}{2}} u(\lambda^2 t, \lambda x)
\end{equation*}
leaves the energy invariant:
\begin{equation*}
E[u] = \int_{\R^d} \frac{|\nabla u(t,x)|^2}{2} \pm \frac{d-2}{d+2} |u|^{\frac{d+2}{d-2}} dx.
\end{equation*}
The corresponding scaling critical Sobolev space is $\dot{H}^1(\R^d)$. For local well-posedness in $\dot{H}^1(\R^d)$ we refer to the survey by Bourgain \cite{Bourgain1999Book}. Global well-posedness and scattering for the defocusing case is much harder and was proved for $d=3$ by Colliander \emph{et al.} \cite{CollianderKeelStaffilaniTakaokaTao2008} and for $d=4$ by Ryckman--Vi\c{s}an \cite{RyckmanVisan2007}; see also references therein and Bourgain's seminal contribution \cite{Bourgain1999} in the radially symmetric case. Sharp conditions for global well-posedness and scattering of the focusing equation in the radial case were proved by Kenig--Merle \cite{KenigMerle2006}. We refer to the solutions with initial data in $M^{1+\varepsilon}_{4,2}$ as infinite energy solutions because we can find a sequence of Schwartz functions $(f_n) \subseteq \mathcal{S}(\R^d)$ with $\| f_n \|_{M^{1+\varepsilon}_{4,2}} \leq C$, but $\| f_n \|_{H^1} \uparrow \infty$. For this purpose, consider a mollified indicator function $f_n = \chi_1 * 1_{B(0,n)}$ supported in $B(0,n+1)$, which has Fourier support rapidly decaying off $B(0,2)$. We normalize $1= \| f_n \|_{L^4(\R^d)} \sim \| f_n \|_{M^{1+\varepsilon}_{4,2}(\R^d)}$. But then,
\begin{equation*}
\| f_n \|_{H^1(\R^d)} \sim \| f_n \|_{L^2(\R^d)} \uparrow \infty.
\end{equation*}
The reason is that we allow for $L^4$-admissible decay at infinity in $M^{1+\varepsilon}_{4,2}$, which is slower than for $L^2$-based Sobolev spaces.

 Previous results on infinite energy solutions to nonlinear Schr\"odinger equations are due to Braz e Silva \emph{et al.} \cite{BrazSilva2009} with initial data in weak $L^p$-spaces. The results in \cite{BrazSilva2009} do not cover the energy critical equations though; see also \cite{CazenaveVegaVilela2001}. Moreover, weak $L^p$-spaces are not invariant under the linear propagation in contrast with modulation spaces. The first results on infinite energy solutions are due to Cazenave--Weissler \cite{CazenaveWeissler1998}, who consider initial data with finite linear solution in a certain $L^p$-norm. The results in \cite{CazenaveWeissler1998} do not cover the energy critical case. $L^2$-based Besov spaces were considered by Planchon \cite{Planchon2000}. We also mention the recent contributions of Correia \emph{et al.} \cite{Correia1,Correia2}.

Moreover, we remark how the arguments of \cite{Schippa2022} extend to $L^2$-critical equations for $d \in \{1,2\}$, i.e., the quintic NLS on the real line or the cubic NLS in $\R^2$. Note that
\begin{equation*}
H^s(\R^d) \sim M^s_{2,2}(\R^d) \hookrightarrow M_{p,2}^s(\R^d)
\end{equation*}
for $p \geq 2$ and $s \geq 0$. In this sense, the following well-posedness results are almost critical:
\begin{theorem}
\label{thm:AlmostL2Critical}
Let $s>0$ and $T>0$. 
\begin{itemize}
\item[(1)] Then, the equation
\begin{equation}
\label{eq:QuinticNLS1d}
\left\{ \begin{array}{cl}
i \partial_t u + \Delta u &= \pm |u|^4 u, \quad (t,x) \in \R \times \R, \\
u(0) &= u_0 \in M^s_{6,2}(\R) + L^2(\R)
\end{array} \right.
\end{equation}
is locally well-posed in $X_T = C([0,T],L^2(\R) + M^s_{6,2}(\R)) \cap L_t^6([0,T],L^6(\R))$ provided that $\| u_0 \|_{M^s_{6,2}(\R) + L^2(\R)} \leq \varepsilon(T)$.
\item[(2)] The equation
\begin{equation}
\label{eq:CubicNLS2d}
\left\{ \begin{array}{cl}
i \partial_t u + \Delta u &= \pm |u|^2 u, \quad (t,x) \in \R \times \R^2, \\
u(0) &= u_0 \in M^s_{4,2}(\R^2) + L^2(\R^2)
\end{array} \right.
\end{equation}
is locally well-posed in $X_T = C([0,T],L^2(\R^2)+ M^s_{4,2}(\R^2)) \cap L_t^4([0,T],L^4(\R^2))$ provided that $\|u_0 \|_{M^s_{4,2}(\R^2) + L^2(\R^2)} \leq \varepsilon(T)$.
\end{itemize}
\end{theorem}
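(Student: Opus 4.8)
The plan is to run a contraction argument for the Duhamel map
$$\Phi(u)(t) = e^{it\Delta} u_0 \mp i \int_0^t e^{i(t-t')\Delta}\bigl(|u|^{p-1}u\bigr)(t')\,dt'$$
on a small ball of $X_T$, where $(p,d)\in\{(5,1),(3,2)\}$ so that the nonlinearity equals $|u|^4u$ or $|u|^2u$. The structural observation driving everything is that the diagonal Lebesgue exponent $q=p+1\in\{6,4\}$ appearing in $X_T$ is simultaneously (i) $L^2$-admissible, since $\frac{2}{q}+\frac{d}{q}=\frac{d}{2}$, and (ii) the borderline decoupling exponent $q=\frac{2(d+2)}{d}$ at which $s_{\mathrm{dec}}(q,d)=0$. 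These two facts coincide precisely because both express the endpoint $L^2\to L^{2(d+2)/d}$ bound; consequently the classical homogeneous Strichartz estimate (for the $L^2$ part of the data) and the smoothing estimate of Theorem~\ref{thm:LinearSmoothing} with $s>2s_{\mathrm{dec}}=0$ (for the $M^s_{q,2}$ part) place the free evolution in the \emph{same} space $L^q_{t,x}$.

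First I would record the linear estimates. Pick a decomposition $u_0=v_0+w_0$ with $v_0\in M^s_{q,2}$, $w_0\in L^2$ nearly realizing $\|u_0\|_{M^s_{q,2}+L^2}$. Then $\|e^{it\Delta}w_0\|_{L^q_{t,x}}\lesssim\|w_0\|_{L^2}$ holds globally in time, while Theorem~\ref{thm:LinearSmoothing} gives $\|e^{it\Delta}v_0\|_{L^q_{t,x}([0,1]\times\R^d)}\lesssim\|v_0\|_{M^s_{q,2}}$. To pass to a general interval I would cover $[0,T]$ by $O(T)$ unit translates, writing $e^{it\Delta}v_0=e^{i\tau\Delta}\bigl(e^{ij\Delta}v_0\bigr)$ on $[j,j+1]$ and invoking the uniform boundedness (invariance) $\|e^{ij\Delta}v_0\|_{M^s_{q,2}}\lesssim\|v_0\|_{M^s_{q,2}}$ of the modulation norm under the free flow; this yields $\|e^{it\Delta}v_0\|_{L^q_{t,x}([0,T])}\lesssim C(T)\|v_0\|_{M^s_{q,2}}$ with $C(T)\sim\max(1,T^{1/q})$, and this is the sole source of the $T$-dependence of $\varepsilon(T)$. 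The same invariance together with strong continuity of the flow on $L^2$ and on $M^s_{q,2}$ puts $e^{it\Delta}u_0$ in $C([0,T],L^2+M^s_{q,2})$, so altogether $\|e^{it\Delta}u_0\|_{X_T}\lesssim C(T)\|u_0\|_{M^s_{q,2}+L^2}$.

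For the nonlinear term, note that $|u|^{p-1}u$ is a polynomial in $u,\bar u$ of degree $p$ and that $q'=(p+1)/p$ is the dual admissible exponent. The inhomogeneous (dual) Strichartz estimate, whose constant is $T$-independent, then gives
$$\Bigl\|\int_0^t e^{i(t-t')\Delta}\bigl(|u|^{p-1}u\bigr)\,dt'\Bigr\|_{C([0,T],L^2)\cap L^q_{t,x}}\lesssim\bigl\||u|^{p-1}u\bigr\|_{L^{q'}_{t,x}}=\|u\|_{L^q_{t,x}}^{p}\le\|u\|_{X_T}^{p},$$
where the identity is Hölder in the form $\int|u|^{pq'}=\int|u|^{q}$. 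I would stress the structural point that the Duhamel term always lands in $C_tL^2\subseteq C_t(L^2+M^s_{q,2})$: the nonlinear interaction regularizes into finite mass, so only the free evolution carries a genuine modulation-space component. The companion Lipschitz bound $\bigl\||u|^{p-1}u-|v|^{p-1}v\bigr\|_{L^{q'}_{t,x}}\lesssim\bigl(\|u\|_{X_T}^{p-1}+\|v\|_{X_T}^{p-1}\bigr)\|u-v\|_{X_T}$ follows from $\bigl||z|^{p-1}z-|w|^{p-1}w\bigr|\lesssim(|z|^{p-1}+|w|^{p-1})|z-w|$ and Hölder.

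Combining the estimates gives $\|\Phi(u)\|_{X_T}\le C_0C(T)\|u_0\|_{M^s_{q,2}+L^2}+C_1\|u\|_{X_T}^{p}$ and a matching difference bound. Taking the radius $R\sim C_0C(T)\|u_0\|_{M^s_{q,2}+L^2}$ and imposing $C_1R^{p-1}\le\tfrac12$, i.e.\ $\|u_0\|_{M^s_{q,2}+L^2}\le\varepsilon(T)$ with $\varepsilon(T)$ comparable to $C(T)^{-1}$ up to a constant depending only on the nonlinearity, makes $\Phi$ a contractive self-map of the ball; its fixed point is the solution, and uniqueness in $X_T$ and Lipschitz dependence on the data come from the same contraction. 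Since the one genuinely hard analytic ingredient --- the modulation-space linear bound --- is handed to us by Theorem~\ref{thm:LinearSmoothing} at the borderline exponent, I expect the main obstacle to be the bookkeeping: keeping a single Strichartz norm $L^q_{t,x}$ for the whole solution while splitting only the data into its $L^2$ and $M^s_{q,2}$ parts, and checking that the borderline decoupling exponent $\frac{2(d+2)}{d}$ is attained --- and equals the $L^2$-admissible diagonal exponent --- in both admissible dimensions $d\in\{1,2\}$.
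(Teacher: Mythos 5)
Your overall scheme is exactly the paper's: split $u_0$ into an $M^s_{q,2}$ part and an $L^2$ part, estimate the free evolution of the former by the smoothing estimate of Theorem \ref{thm:LinearSmoothing} and of the latter by homogeneous Strichartz, treat the Duhamel term with the dual Strichartz estimate at the exponent $q'=(p+1)/p$ (so that it lands in $C_tL^2$), and close a small-data contraction in $X_T$, with the smallness threshold $\varepsilon(T)$ absorbing the growth of the linear constant in $T$. Your identification of $q=\tfrac{2(d+2)}{d}$ as simultaneously the diagonal admissible exponent and the borderline decoupling exponent with $s_{\mathrm{dec}}=0$ is also the correct reason the two halves of the data can share one norm $L^q_{t,x}$.

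There is, however, one step that is genuinely wrong as stated: the extension of the smoothing estimate from $[0,1]$ to $[0,T]$ via the claim that $\| e^{ij\Delta} v_0 \|_{M^s_{q,2}} \lesssim \| v_0 \|_{M^s_{q,2}}$ \emph{uniformly} in $j$. The free propagator does map $M_{p,2}$ to itself for each fixed time (this is the ``invariance'' meant in the introduction), but its operator norm is \emph{not} uniformly bounded in time: the sharp bound is $\| e^{it\Delta} \|_{M_{p,2}\to M_{p,2}} \sim (1+|t|)^{d(\frac12-\frac1p)}$ for $p\geq 2$ (take a chirp $e^{-it\Delta}g$, $g$ a Gaussian: its $M_{\infty,q}$ norm is $\sim t^{-d/2}$ while that of its image $g$ is $\sim 1$). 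With the correct propagator bound, your covering argument yields $C(T)\sim T^{1/2}$ for $(d,q)=(1,6)$ and $C(T)\sim T^{3/4}$ for $(d,q)=(2,4)$, not $\max(1,T^{1/q})$. This does not sink the theorem --- since $\varepsilon(T)$ is left unspecified, any finite $C(T)$ closes the contraction --- but your derivation of the constant is invalid. The clean repair, and the way to actually recover the paper's $\langle T\rangle^{1/q}$, is to observe that the unit-interval smoothing estimate holds with a constant uniform over \emph{translated} intervals $[j,j+1]$, because the $\ell^2$-decoupling inequality \eqref{eq:l2Decoupling} underlying Theorem \ref{thm:LinearSmoothing} is invariant under translating the ball $B_{d+1}(0,R)$ anywhere in space-time; then summing the $q$-th powers over $O(T)$ unit intervals gives $C(T)\sim \langle T\rangle^{1/q}$ directly, with no propagator bound on $M^s_{q,2}$ needed.
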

Note how above we choose the the size of the initial data in terms of the existence time. It would be more practical to consider $T=T(u_0)$, which is possible, but not detailed for simplicity of presentation (see the proof of Theorem \ref{thm:LWPEnergyCriticalNLS} below).

 For $d \geq 3$ the derivative loss in the high frequencies of the $L^4$-Strichartz estimate has to be ameliorated via bilinear estimates. We show the following:
\begin{theorem}
\label{thm:LWPEnergyCriticalNLS}
Let $d \in \{3,4 \}$, and $\varepsilon > 0$. Then \eqref{eq:EnergyCriticalNLS} is analytically locally well-posed in $X_T \hookrightarrow C([0,T],M^{1+\varepsilon}_{4,2}(\R^d))$. For any $u_0 \in M^{1+\varepsilon}_{4,2}(\R^d)$ there is $T=T(u_0)$ such that there is a unique solution $u \in X_T$ to \eqref{eq:EnergyCriticalNLS}, and the data-to-solution mapping analytically depends on the initial value.
\end{theorem}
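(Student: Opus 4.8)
The plan is to set up a contraction mapping (fixed-point) argument for the Duhamel formulation of \eqref{eq:EnergyCriticalNLS}, so that analytic well-posedness follows automatically from the multilinearity of the nonlinear estimate. I would define the solution map
\begin{equation*}
\Phi(u)(t) = e^{it\Delta} u_0 \mp i \int_0^t e^{i(t-s)\Delta} \bigl( |u|^{\frac{4}{d-2}} u \bigr)(s)\, ds,
\end{equation*}
and look for a fixed point in a suitable adapted function space $X_T$ embedding into $C([0,T],M^{1+\varepsilon}_{4,2}(\R^d))$. The key is to choose $X_T$ as an $\ell^2$-summed (over dyadic frequency blocks $P_N$) space built on $U^2/V^2$-type or Strichartz norms, since the paper advertises ``adapted function spaces'' and the nonlinearity $|u|^{4/(d-2)}u$ is cubic for $d=3$ and quadratic-type (power $4/(d-2)=2$) for $d=4$, hence a genuine polynomial in $u,\bar u$ in both cases. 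The linear term is controlled by the embedding and Theorem~\ref{thm:LinearSmoothing}, giving $\|e^{it\Delta}u_0\|_{X_T} \lesssim \|u_0\|_{M^{1+\varepsilon}_{4,2}}$.

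The heart of the argument is the nonlinear/multilinear estimate: I would need to bound the Duhamel integral of the nonlinearity in $X_T$ by a product of $X_T$-norms, with a factor that is a positive power of $T$ (or of the existence time) to close the contraction for small times. The strategy is to decompose each factor into dyadic frequency pieces $P_{N_j}u$, and on each frequency interaction reduce the $(\frac{4}{d-2}+1)$-linear expression to pairings that can be estimated by the bilinear refinement in Proposition~\ref{prop:BilinearRefinement}. The gain $N_2^{\frac{d-2}{2}+\varepsilon}$ in \eqref{eq:BilinearEstimate} is precisely what compensates the derivative loss in the endpoint $L^4$-Strichartz estimate at high frequencies, so I would pair the two highest frequencies bilinearly and estimate the remaining factors in $L^\infty_t L^\infty_x$ or $L^4$-type norms via Bernstein and Theorem~\ref{thm:LinearSmoothing}, keeping careful track of where the $M^{1+\varepsilon}_{4,2}$-regularity $s=1+\varepsilon$ (matching the scaling-critical $\dot H^1$ up to $\varepsilon$) enters through the frequency weights $N^{1+\varepsilon}$.

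After the dyadic pieces are estimated, the remaining work is to sum over all frequency configurations. Here I would exploit the constraint $N_2 \lesssim N_1$ in the bilinear estimate together with the smallness $N_2^{\frac{d-2}{2}+\varepsilon}$ against the regularity weights to produce a geometric series in the ratio of frequencies, yielding an $\ell^2$-summable bound and hence a clean product estimate $\|\Phi(u)-e^{it\Delta}u_0\|_{X_T} \lesssim \|u\|_{X_T}^{\frac{4}{d-2}+1}$, with an analogous Lipschitz estimate for the difference $\Phi(u)-\Phi(v)$ on a ball. Standard Banach fixed-point theory on a small ball in $X_T$ then gives existence, uniqueness, and — because $\Phi$ is a (finite) polynomial map on the Banach space $X_T$ — analyticity of the data-to-solution map; choosing $T=T(u_0)$ small enough absorbs the norm of $u_0$ into the smallness condition. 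The main obstacle I anticipate is the frequency-summation step for the endpoint regularity $s=1$: the bilinear gain only barely beats the derivative loss, so the $\varepsilon$-loss in both Theorem~\ref{thm:LinearSmoothing} and Proposition~\ref{prop:BilinearRefinement} must be organized so that the high-high-to-low and high-low frequency interactions both remain summable, and verifying that the adapted space $X_T$ is simultaneously compatible with the bilinear estimate (which is stated for free evolutions $e^{it\Delta}f$) via the $U^2/V^2$ transfer principle will require care.
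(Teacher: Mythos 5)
Your overall architecture --- Duhamel formulation, contraction in an $\ell^2$-dyadic $U^2/V^2$-based space, bilinear refinements transferred to $V^2_\Delta M_{4,2}$, analyticity from polynomiality of the fixed-point map --- coincides with the paper's. But there is a genuine gap at the step you rely on to close the contraction: you ask for a nonlinear estimate ``with a factor that is a positive power of $T$'' so that shrinking $T$ absorbs the size of the data. No such factor is available from the tools you (correctly) list. The bilinear estimate \eqref{eq:V2BilinearEstimate}, the transferred linear estimates, and the Bernstein-type $L^\infty$ bounds all hold on $[0,1]$ with constants that do not improve under restriction to $[0,T]$, and $U^2_\Delta/V^2_\Delta$ norms of free evolutions do not shrink as $T \to 0$. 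Accordingly, the paper's nonlinear estimates \eqref{eq:NonlinearEstimateI}--\eqref{eq:NonlinearEstimateII} (Proposition \ref{prop:NonlinearEstimates}) are stated with constants \emph{independent} of $T$ and with no $T$-gain; this is the characteristic feature of the (energy-)critical setting, and it is precisely why the theorem asserts $T=T(u_0)$, depending on the profile of the data, rather than $T=T(\|u_0\|)$. Your mechanism would effectively prove a norm-dependent existence time, which one should not expect here.

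What replaces it in the paper is a two-tier argument you are missing. For small data the contraction closes on the fixed interval $[0,1]$ purely from smallness of $\|u\|_{X^s}$, no $T$-factor needed. For large data one fixes a frequency cutoff $N$ and works in the ball $\{\|u\|_{X^s}\leq 2A,\ \|u_{>N}\|_{X^s}\leq 2\delta\}$, splitting the nonlinearity as $F=F_1+F_2$ with $F_1=O(u_{>N}^2\, u^{\frac{6-d}{d-2}})$ and $F_2=O(u_{\leq N}^{\frac{4}{d-2}} u)$: the $F_1$-terms contain two high-frequency factors and are small via Proposition \ref{prop:NonlinearEstimates} because $\|u_{>N}\|_{X^s}\leq 2\delta$, while the $F_2$-terms are estimated crudely in $L^1_t M^s_{4,2}$ using the H\"older-type product estimate for modulation spaces and Bernstein, which \emph{does} produce a factor $T N^{\text{pow}}$ --- legitimately, because these pieces are frequency-truncated. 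Choosing first $N$ so that the tail of $u_0$ is small, then $\delta$ small depending on $A$, then $T$ small depending on $A$ and $N$, closes the contraction; uniqueness in the full space then follows by comparing two solutions through a common high frequency cutoff. Two smaller corrections: the nonlinearity is quintic for $d=3$ ($|u|^4u$) and cubic for $d=4$ ($|u|^2u$), not cubic/quadratic as you state; and the frequency summation you worry about is indeed carried out at $s=1+\varepsilon$ with the $\varepsilon$-room distributed over the bilinear gains, exactly as in the Killip--Vi\c{s}an scheme.
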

The first local well-posedness results on energy critical nonlinear Schr\"odinger equations in the periodic setting are due to Herr--Tataru--Tzvetkov \cite{HerrTataruTzvetkov2011,HerrTataruTzvetkov2014}. In these works, improved bilinear or trilinear estimates were proved via orthogonality in time.
This proof was simplified by Killip--Vi\c{s}an \cite{KillipVisan2016}, which is transferred to modulation spaces presently. Killip--Vi\c{s}an pointed out how the bilinear refinements can be used to show the well-posedness result for the energy critical equation. A few remarks on global results in the periodic setting are in order: Herr--Tataru--Tzvetkov \cite{HerrTataruTzvetkov2011} proved global well-posedness for small initial data by energy conservation. Since the Sobolev embedding $H^1(\T^d) \hookrightarrow L^{\frac{d+2}{d-2}}(\T^d)$ is sharp, the straight-forward use of energy conservation requires smallness of the $H^1(\T^d)$ norm. Ionescu--Pausader \cite{IonescuPausader2012} subsequently proved global well-posedness for large initial data in the defocusing case for $d=3$ (see also \cite{Strunk2015,Strunk2015Thesis}). But the global results fundamentally build on energy conservation, which is not at disposal for initial data in $M^s_{4,2}(\R^d)$, since these possibly have infinite mass and energy. Thus, global results, even in the defocusing case remain open for initial data in $M^s_{4,2}(\R^d)$. On the other hand, the classical blow-up arguments (cf. \cite{KenigMerle2006}) in the focusing case show that global solutions need not exist, if the energy is negative. For ill-posedness results for the nonlinear Schr\"odinger equation with initial data in modulation spaces we refer to Bhimani--Carles \cite{BhimaniCarles2020}.

For further reading, we also refer to the very recent contribution \cite{ChenHolmer2022}, in which \emph{unconditional uniqueness} of solutions in $C([0,T],H^1(X^d))$ for energy critical Schr\"o\-din\-ger equations is proved for $d \in \{3,4\}$ and $X \in \{\T,\R\}$. Chen and Holmer \cite{ChenHolmer2022} use the Gross--Pitaevskii hierarchy, which was previously considered by Herr and Sohinger \cite{HerrSohinger2019} to cover the whole subcritical range for $d=4$; see also references therein.

\bigskip

\emph{Outline of the paper.} In Section \ref{section:Notations} we recall basic facts about modulation spaces, and we introduce the function spaces used in the proof of Theorem \ref{thm:LWPEnergyCriticalNLS}. In Section \ref{section:Bilinear} we show Proposition \ref{prop:BilinearEstimate}, by which we prove Theorem \ref{thm:LWPEnergyCriticalNLS} in Section \ref{section:ProofEnergyCritical}. Theorem \ref{thm:AlmostL2Critical} is proved in Section \ref{section:ProofEnergyCritical} with linear Strichartz estimates for comparison.

\section{Preliminaries}
\label{section:Notations}
\subsection{Modulation spaces}
The modulation spaces $M^s_{p,q}(\R^d)$ for $d \geq 1$, $s \in \R$, $p,q \in [1,\infty]$ are defined through an isometric decomposition in Fourier space. Let $(\sigma_k)_{k \in \Z^d}$ with $\sigma_k = \sigma( \cdot - k)$ and $\sigma \in C^\infty_c(B(0,1))$ denote a smooth partition of unity. We define
\begin{equation*}
M^s_{p,q}(\R^d) = \{ f \in \mathcal{S}'(\R^d): \| f \|_{M^s_{p,q}(\R^d)} = \big\| \big( \langle k \rangle^{s} \| \sigma_k(D) f \|_{L^p(\R^d)} \big)_{k \in \Z^d} \big\|_{\ell^q} < \infty \}.
\end{equation*}
We write $M_{p,q}(\R^d) := M^0_{p,q}(\R^d)$ for brevity. We have the following embeddings in the standard Besov scale (cf. \cite[Section~1]{Schippa2022}): By the embedding $\ell^{q_1} \hookrightarrow \ell^{q_2}$ for $q_1 \leq q_2$ and Bernstein's inequality, we have
\begin{align*}
M^s_{p,q_1}(\R^d) \hookrightarrow M^s_{p,q_2}(\R^d) \quad (q_1 \leq q_2), \\
M^s_{p_1,q}(\R^d) \hookrightarrow M^s_{p_2,q}(\R^d) \quad (p_1 \leq p_2).
\end{align*}
By Plancherel's theorem, we have 
\begin{equation}
\label{eq:Plancherel}
M_{2,2}(\R^d) = L^2(\R^d).
\end{equation}
Moreover, we have from kernel estimates with $p=1$ and $p=\infty$ and interpolation with \eqref{eq:Plancherel} the estimates
\begin{align*}
M_{p,p'} \hookrightarrow L^p \hookrightarrow M_{p,p} \quad (2 \leq p \leq \infty), \\
M_{p,p} \hookrightarrow L^p \hookrightarrow M_{p,p'} \quad (1 \leq p \leq 2).
\end{align*}
Lastly, we note that
\begin{equation*}
M^{s_1}_{p,q_1}(\R^d) \hookrightarrow M^{s_2}_{p,q_2}(\R^d)
\end{equation*}
provided that $s_1-s_2 > d \big( \frac{1}{q_2} - \frac{1}{q_1} \big) > 0$ as a consequence of H\"older's inequality.

\subsection{Adapted function spaces}
We use $U^p$-/$V^p$-spaces taking values in modulation spaces as iteration spaces. $U^p$-/$V^p$-spaces based on $L^2$-based Sobolev spaces go back to unpublished notes of Tataru in the context of wave maps (cf. \cite{KochTataru2005}). In case the base space is a general Banach space, we refer to \cite{KochTataruVisan2014} (see also the previous work by Hadac--Herr--Koch \cite{HadacHerrKoch2009,HadacHerrKoch2010}). We shall be brief in the following.

Let $\mathcal{Z}$ be the set of finite partitions $-\infty = t_0 < t_1 < \ldots < t_K= \infty$ and let $\mathcal{Z}_0$ be the set of finite partitions $-\infty < t_0 < t_1 < \ldots < t_K \leq \infty$. We consider $U^p$-/$V^p$-spaces taking values in modulation spaces $M_{p,q}(\R^d)$. Denote the value space in the following by $E$.

\begin{definition}
Let $1 < p <\infty$ and $\{ t_k \}_{k=0}^K \in \mathcal{Z}$ and $\{ \phi \}_{k=0}^{K-1} \subseteq E$ with $\sum_{k=0}^{K-1} \| \phi_k \|_E^p = 1$ and $\phi_0 = 0$. The function $a: \R \to E$ defined by \\ $a = \sum_{k=1}^{K} 1_{[t_{k-1},t_k)} \phi_{k-1}$ is said to be a $U^p$-atom. We define the atomic space
\begin{equation*}
U^p(E) = \{ u = \sum_{j=1}^\infty \lambda_j a_j \, : \, a_j: U^p-\text{atom}, \, (\lambda_j) \in \ell^1 \}
\end{equation*}
with norm
\begin{equation*}
\| u \|_{U^p} = \inf \{ \sum_{j=1}^\infty |\lambda_j| \, : u = \sum_{j=1}^\infty \lambda_j a_j, \, (\lambda_j) \in \ell^1, \, a_j:U^p-\text{atom} \}.
\end{equation*}
\end{definition}
Subspaces are considered as in \cite[Proposition~2.2]{HadacHerrKoch2009}. The spaces of $p$-variation were already considered by Wiener \cite{Wiener1979} (see \cite[Definition~4.8]{KochTataruVisan2014}).
\begin{definition}
Let $1 \leq p < \infty$. $V^p(E)$ is defined as normed space of all functions $v: \R \to E$ such that $\lim_{t \to \pm \infty} v(t)$ exists, $v(\infty):= 0$ (this is purely conventional and does not necessarily coincide with the limit), and $v(-\infty) = \lim_{t \to - \infty} v(t)$. The norm is given by
\begin{equation*}
\| v \|_{V^p} = \sup_{ \{t_k\}_{k=0}^K \in \mathcal{Z}} \big( \sum_{k=1}^K \| v(t_k) - v(t_{k-1}) \|_E^p \big)^{\frac{1}{p}}
\end{equation*}
is finite. Let $V^p_-$ denote the closed subspace of $V^p$ with $\lim_{t \to -\infty} v(t) = 0$.
\end{definition}
We have the embeddings (cf. \cite[Lemma~4.13]{KochTataruVisan2014}):
\begin{equation*}
U^p \hookrightarrow V^p_{rc,-} \hookrightarrow U^q
\end{equation*}
for $1<p<q<\infty$.
Recall the duality $(M_{p,q}(\R^d))' \simeq M_{p',q'}(\R^d)$ for $1 <p,q < \infty$, which is established via the dual pairing
\begin{alignat*}{2}
\langle \cdot, \cdot \rangle: M_{p,q}(\R^d) &\times M_{p',q'}(\R^d) &&\to \C \\
(f&,g) &&\mapsto \int_{\R^d} f \overline{g} dx.
\end{alignat*}
We have the following duality:
\begin{theorem}[{\cite[Theorem~4.14]{KochTataruVisan2014}}]
\label{thm:UpVpDuality}
Let $1<p<\infty$. We have
\begin{equation*}
(U^p(E))^* \simeq V^{p'}(E')
\end{equation*}
in the sense that
\begin{equation*}
T: V^{p'}(E') \to (U^p(E))^*, \quad T(v) = B(\cdot,v)
\end{equation*}
is an isometric isomorphism. 
\end{theorem}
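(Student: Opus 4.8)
The plan is to show that the bilinear form $B$, which on a right-continuous $E$-valued step function $u = \sum_{k=1}^K 1_{[t_{k-1},t_k)} u_{k-1}$ and $v \in V^{p'}(E')$ is given by the discrete Riemann--Stieltjes pairing
\begin{equation*}
B(u,v) = \sum_{k=1}^K \langle u_{k-1}, v(t_k) - v(t_{k-1}) \rangle_{E,E'},
\end{equation*}
extends to a perfect pairing realizing the claimed isometric isomorphism. First I would establish the fundamental bound $|B(u,v)| \le \|u\|_{U^p(E)} \|v\|_{V^{p'}(E')}$. It suffices to treat a single $U^p$-atom $a = \sum_{k=1}^K 1_{[t_{k-1},t_k)} \phi_{k-1}$ with $\sum_k \|\phi_{k-1}\|_E^p = 1$: after refining to a common partition, applying the $E$--$E'$ duality termwise and then H\"older's inequality in the index $k$ between the sequences $(\|\phi_{k-1}\|_E)_k \in \ell^p$ and $(\|v(t_k)-v(t_{k-1})\|_{E'})_k \in \ell^{p'}$ bounds $|B(a,v)|$ by $\|v\|_{V^{p'}}$. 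Summing over the atomic decomposition of a general $u \in U^p$ and passing to the infimum then shows that $T(v) = B(\cdot,v)$ defines an element of $(U^p(E))^*$ with $\|T(v)\|_{(U^p)^*} \le \|v\|_{V^{p'}}$.

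For the reverse inequality, making $T$ an isometry, I would fix $v$ and $\eta > 0$, choose a partition $\{t_k\}$ with $\sum_k \|v(t_k) - v(t_{k-1})\|_{E'}^{p'} \ge \|v\|_{V^{p'}}^{p'} - \eta$, and use Hahn--Banach to select $\psi_{k-1} \in E$ with $\|\psi_{k-1}\|_E = 1$ and $\langle \psi_{k-1}, v(t_k) - v(t_{k-1}) \rangle \approx \|v(t_k) - v(t_{k-1})\|_{E'}$. Putting $\phi_{k-1} = c\,\|v(t_k)-v(t_{k-1})\|_{E'}^{p'-1} \psi_{k-1}$ with $c$ normalizing $\sum_k \|\phi_{k-1}\|_E^p = 1$ produces a $U^p$-atom $a$ for which, using $p(p'-1) = p'$, $B(a,v)$ equals $\big( \sum_k \|v(t_k)-v(t_{k-1})\|_{E'}^{p'} \big)^{1/p'}$ up to $\eta$. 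Letting $\eta \to 0$ yields $\|T(v)\| \ge \|v\|_{V^{p'}}$.

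The surjectivity of $T$ is where I expect the real work to lie. Given $L \in (U^p(E))^*$, the natural candidate is the $E'$-valued function $v$ defined by $\langle e, v(t) \rangle_{E,E'} = L(1_{[t,\infty)} e)$ for each $e \in E$, with $v(\infty) = 0$; here $t \mapsto 1_{[t,\infty)} e$ is a bounded family in $U^p$, so this is well defined. The hard part is to show $v \in V^{p'}(E')$ with $\|v\|_{V^{p'}} \le \|L\|$: for any partition one must estimate $\sum_k \|v(t_k) - v(t_{k-1})\|_{E'}^{p'}$, which by the same Hahn--Banach and $\ell^p$--$\ell^{p'}$ construction as above reduces to evaluating $L$ on a suitably normalized step function of $U^p$-norm at most one, whence the bound follows from $|L(u)| \le \|L\|\,\|u\|_{U^p}$. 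One then verifies by summation by parts that $B(u,v) = L(u)$ for every step function $u$, and concludes via the density of step functions in $U^p(E)$ and the continuity of both sides. Throughout, the limits at $\pm\infty$ and the right-continuity normalization must be tracked, and the reflexivity of $E = M_{p,q}(\R^d)$ for $1 < p,q < \infty$ is convenient when identifying the two directions of the pairing.
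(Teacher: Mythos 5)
The paper does not actually prove this statement: Theorem \ref{thm:UpVpDuality} is imported verbatim from \cite[Theorem~4.14]{KochTataruVisan2014} (see also \cite[Theorem~2.8]{HadacHerrKoch2009}), so there is no in-paper argument to compare against. Your outline reproduces the standard proof from those references: the termwise duality/H\"older bound $|B(a,v)| \le \|v\|_{V^{p'}}$ on atoms, the near-norming step elements with exponent $p'-1$ (note $p(p'-1)=p'$) for the isometry, and the candidate $\langle e, v(t)\rangle = -L(1_{[t,\infty)}e)$ for surjectivity --- with your sign convention and the normalizations $\phi_0=0$, $v(\infty)=0$ a minus sign is needed here, since $B(1_{[t,\infty)}e, v) = -\langle e, v(t)\rangle$; this is cosmetic. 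All three pillars are correct, and your closing remarks (limits at $\pm\infty$, tracking conventions) point at real but routine issues; reflexivity of $E$ is in fact never needed, and norming elements of $E'$ by unit vectors of $E$ is just the definition of the dual norm rather than Hahn--Banach.

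The one place where your sketch hides a genuine technical point is the extension of $B$ from step functions to all of $U^p(E)$. To get $|B(u,v)| \le \|u\|_{U^p}\|v\|_{V^{p'}}$ by ``summing over the atomic decomposition and passing to the infimum'' you must already know that $B(u,v) = \sum_j \lambda_j B(a_j,v)$ for \emph{every} (infinite) atomic decomposition $u = \sum_j \lambda_j a_j$, i.e.\ that this value is independent of the decomposition; and your proposed justification --- density of step functions plus ``continuity of both sides'' --- is circular, because continuity of $B(\cdot,v)$ with respect to the $U^p$-norm is precisely the estimate being established (note that a near-optimal decomposition of even a step function involves atoms whose jump points have nothing to do with those of $u$, so the naive termwise formula does not directly apply). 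The references resolve this by defining $B(u,v)$ as the limit of the Riemann--Stieltjes sums $\sum_i \langle u(t_{i-1}),\, v(t_i)-v(t_{i-1})\rangle$ over the net of refining partitions: for an atom the net is eventually constant (refinements telescope on each step), the atom bound then gives uniform control, and convergence of the net for general $u \in U^p$ yields simultaneously the well-definedness, the linearity in $u$, and the desired bound. With that device inserted, your argument goes through and is essentially the proof in \cite{HadacHerrKoch2009,KochTataruVisan2014}.
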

We have an explicit description of $B$ for sufficiently regular functions:
\begin{proposition}
Let $1<p<\infty$, $u \in V^1_-$ be absolutely continuous on compact intervals and $v \in V^{p'}(E')$. Then,
\begin{equation*}
B(u,v) = - \int_{-\infty}^\infty \langle u'(t), v(t) \rangle dt.
\end{equation*}
\end{proposition}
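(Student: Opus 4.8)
The plan is to reduce the identity to an elementary summation by parts for the step-function approximations of $u$ and then pass to the limit.

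First I would record the consequences of the regularity assumptions. Since $u$ is absolutely continuous on compact intervals it is continuous, hence $u \in V^1_{rc,-}$, and the embedding $V^1_{rc,-} \hookrightarrow U^p(E)$ places $u$ in the predual appearing in Theorem \ref{thm:UpVpDuality}, so that $B(u,v)$ is defined and obeys $|B(u,v)| \le \|u\|_{U^p} \|v\|_{V^{p'}}$. Absolute continuity together with $u \in V^1$ moreover yields $u' \in L^1(\R;E)$ with $\int_{-\infty}^\infty \|u'(t)\|_E\,dt < \infty$ and $u(t) = \int_{-\infty}^t u'(s)\,ds$; in particular $u(-\infty)=0$. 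Recall also that on step functions $B$ is the Riemann--Stieltjes pairing of the duality: for the piecewise-constant function $u_{\mathcal{Z}}$ attached to a partition $\mathcal{Z} = \{t_k\}_{k=0}^K$ one has $B(u_{\mathcal{Z}},v) = \sum_{k=1}^K \langle u(t_{k-1}), v(t_k)-v(t_{k-1})\rangle$, i.e.\ the approximating sum for $\int \langle u, dv\rangle$.

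Next I would carry out the summation by parts. On a partition $-\infty = t_0 < t_1 < \cdots < t_K = \infty$, abbreviating $U_k = u(t_k)$ and $V_k = v(t_k)$, Abel's identity reads
\begin{equation*}
\sum_{k=1}^K \langle U_{k-1}, V_k - V_{k-1}\rangle = \langle U_K, V_K\rangle - \langle U_0, V_0\rangle - \sum_{k=1}^K \langle U_k - U_{k-1}, V_k\rangle.
\end{equation*}
Both boundary terms vanish identically: $V_K = v(\infty) = 0$ by the convention in the definition of $V^{p'}(E')$, and $U_0 = u(-\infty) = 0$ since $u \in V^1_-$; the $k=K$ summand on the right likewise carries $V_K = 0$, so no value of $u$ at $+\infty$ ever enters. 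Using absolute continuity to write $U_k - U_{k-1} = \int_{t_{k-1}}^{t_k} u'(t)\,dt$, we obtain
\begin{equation*}
B(u_{\mathcal{Z}},v) = -\sum_{k=1}^{K-1} \int_{t_{k-1}}^{t_k} \langle u'(t), v(t_k)\rangle\,dt.
\end{equation*}

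Finally I would refine $\mathcal{Z}$ so that its mesh tends to $0$ and its finite points exhaust $\R$. The left-hand side converges to $B(u,v)$, either because $u_{\mathcal{Z}} \to u$ in $U^p(E)$ and $B(\cdot,v)$ is continuous, or by identifying $B(u,v)$ with $\int \langle u, dv\rangle$, which exists because continuity of $u$ rules out common discontinuities with $v \in V^{p'}$ (Young's criterion for complementary variation). On the right, $v$ is a bounded regulated function with at most countably many discontinuities, so $v(t_k) \to v(t)$ for almost every $t$, and since $\|u'(\cdot)\|_E\,\|v\|_{L^\infty}$ is an integrable dominating function, dominated convergence gives $\sum_k \int_{t_{k-1}}^{t_k} \langle u'(t), v(t_k)\rangle\,dt \to \int_{-\infty}^\infty \langle u'(t), v(t)\rangle\,dt$. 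Combining the limits yields $B(u,v) = -\int_{-\infty}^\infty \langle u'(t), v(t)\rangle\,dt$. The main obstacle is precisely this passage to the limit over the unbounded interval: one must simultaneously cancel the two endpoint contributions at $\pm\infty$ (handled cleanly by $u(-\infty)=0$ and the convention $v(\infty)=0$), identify the Riemann--Stieltjes sums with the abstractly defined $B(u,v)$ (where continuity of $u$, forcing Stieltjes integrability against $v$, is essential), and justify the interchange of limit and integral via the $L^1$-bound on $u'$ and the $L^\infty$-bound on $v$; the remaining steps are routine.
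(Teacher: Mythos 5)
Your proof is correct. Note that the paper does not actually prove this proposition: it is stated as a recalled fact alongside Theorem \ref{thm:UpVpDuality}, being essentially \cite[Proposition~2.10]{HadacHerrKoch2009} (see also \cite{KochTataruVisan2014}), and your argument---Abel summation of the partition sums defining $B$, cancellation of the two boundary terms via $u(-\infty)=0$ and the convention $v(\infty)=0$, identification of $U_k-U_{k-1}$ with $\int_{t_{k-1}}^{t_k} u'\,dt$, and a final passage to the limit by dominated convergence using $u'\in L^1(\R;E)$ and the $V^{p'}$-bound $\sup_t\|v(t)\|_{E'}\leq\|v\|_{V^{p'}}$---is precisely the standard proof from those references, with the limit of the partition sums being $B(u,v)$ essentially by the construction of the duality pairing. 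The one point you should state explicitly is that almost-everywhere differentiability of a Banach-space-valued absolutely continuous function requires the Radon--Nikodym property of the value space; this is harmless here (the spaces $M_{p,q}$ with $1<p,q<\infty$ are reflexive), but without it the symbol $u'$ in the claimed identity is not a priori defined, so it should either be invoked or folded into the hypotheses.
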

Later we rely on computing the $U^p$-norm with the aid of duality:
\begin{equation}
\label{eq:UpNormDuality}
\| u \|_{U^p(E)} = \sup_{v \in V^{p'}(E'): \| v \|_{V^{p'}(E')} = 1 } |B(u,v)|.
\end{equation}
We remark that the spaces can as well be localized to an interval, in which case we write $U^p(I;E)$, $V^p(I;E)$. We furthermore define the space $DU^p(I;E)$:
\begin{equation*}
DU^p(I;E) = \{ f = u' \, : u \in U^p(I;E) \}
\end{equation*}
with the derivative considered in the distributional sense and
\begin{equation*}
\| f \|_{DU^p(I;E)} = \| u \|_{U^p(I;E)}.
\end{equation*}
By Theorem \ref{thm:UpVpDuality}, we have $(DU^p(I;E))^* \simeq V^{p'}(I;E')$ with respect to a bilinear mapping, which for $f \in L^1(I) \hookrightarrow DU^p(I;E)$ is given by
\begin{equation*}
\tilde{B}(f,v) = \int_a^b \langle f(t), v(t) \rangle dt.
\end{equation*}
We adapt $U^p$-/$V^p$-spaces to the linear Schr\"odinger propagation $e^{it \Delta}$ as usual:
\begin{equation}
\label{eq:AdaptedSpaces}
\| u \|_{X^p_\Delta(I;E)} = \| e^{- it \Delta} u \|_{X^p(I;E)}
\end{equation}
with $X \in \{ U;V; DU \}$.

\section{Bilinear refinements}
\label{section:Bilinear}
By Galilean invariance, we can show bilinear estimates with derivative loss only in the low frequency. In the context of Strichartz estimates on tori, we refer to \cite{KillipVisan2016,Bourgain1993}. Starting point is the following linear Strichartz estimate:
\begin{equation}
\label{eq:L4LinearStrichartzEstimate}
\| e^{it \Delta} u_0 \|_{L^4([0,1] \times \R^d)} \lesssim \| u_0 \|_{M^s_{4,2}(\R^d)}.
\end{equation}

\begin{proposition}
\label{prop:BilinearEstimate}
Let $1 \leq K \ll N$ and suppose that \eqref{eq:L4LinearStrichartzEstimate} holds true. Then, we find the following estimate to hold:
\begin{equation*}
\| P_N e^{it \Delta} u_0 P_K e^{it \Delta} v_0 \|_{L^2([0,1] \times \R^d)} \lesssim K^{2s} \| P_N u_0 \|_{M_{4,2}} \| P_K v_0 \|_{M_{4,2}}.
\end{equation*}
\end{proposition}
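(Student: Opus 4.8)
The plan is to exploit Galilean invariance to reduce the bilinear estimate to a situation where both frequency factors are localized near the origin, so that the derivative loss in the linear estimate \eqref{eq:L4LinearStrichartzEstimate} only costs powers of the smaller frequency $K$. The starting point is the elementary observation that the product of two transverse wave packets is controlled by Cauchy--Schwarz together with the separation of their frequency supports. Concretely, I would write $P_N e^{it\Delta} u_0$ and $P_K e^{it\Delta} v_0$ and apply H\"older's inequality in the form $\|fg\|_{L^2} \le \|f\|_{L^4}\|g\|_{L^4}$, which immediately gives
\begin{equation*}
\| P_N e^{it\Delta} u_0 \, P_K e^{it\Delta} v_0 \|_{L^2([0,1]\times\R^d)} \le \| P_N e^{it\Delta} u_0 \|_{L^4} \, \| P_K e^{it\Delta} v_0 \|_{L^4}.
\end{equation*}
Applying \eqref{eq:L4LinearStrichartzEstimate} to each factor then yields a bound by $N^s K^s \| P_N u_0 \|_{M_{4,2}} \| P_K v_0 \|_{M_{4,2}}$, which has the wrong power $N^s$ in the high frequency. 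The entire point of the proposition is to replace this $N^s$ by a further factor of $K^s$, and this is precisely where Galilean invariance must enter.

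The key step is the Galilean transformation. For a frequency cube of $P_N$ centered at some $\xi_0$ with $|\xi_0| \sim N$, I would conjugate the Schr\"odinger flow by the boost $e^{i x\cdot \xi_0} e^{-it|\xi_0|^2}$, using the identity
\begin{equation*}
e^{it\Delta}\big( e^{ix\cdot\xi_0} g \big)(x) = e^{ix\cdot\xi_0} e^{-it|\xi_0|^2} \big( e^{it\Delta} g \big)(x - 2t\xi_0).
\end{equation*}
This moves the high-frequency factor down to frequencies near the origin; the crucial observation is that a Galilean boost is an isometry on $L^2$ and, more to the point, on the isometric-decomposition modulation norm $M_{4,2}$, since it merely translates the frequency tiles and multiplies by a unimodular factor that does not affect any $L^p$ norm. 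After the boost, the high-frequency factor $P_N e^{it\Delta} u_0$ is transported to a function frequency-localized in a unit-scale neighborhood of the origin, so that applying \eqref{eq:L4LinearStrichartzEstimate} to the boosted function costs only $K^s$ rather than $N^s$. Since both factors now live at frequency scale comparable to $K$, each contributes a factor $K^s$, producing the claimed $K^{2s}$.

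The main obstacle I expect is bookkeeping the geometry carefully enough to see that the boost really does collapse the relevant frequency scale to $K$ while keeping the two factors separated. Because $P_N u_0$ is supported in an annulus of thickness $\sim N$ rather than a single tile, one must first decompose the high-frequency piece into unit (or $K$-scale) frequency cubes, boost each one so its support sits near the origin, and then check that after the boost the two factors have transverse frequency supports or that the extra translation $x \mapsto x - 2t\xi_0$ does not spoil the $L^4$ estimate on the fixed time interval $[0,1]$. Here the $M_{4,2}$ norm, being built from an $\ell^2$ sum over a finitely-overlapping frequency decomposition, is ideally suited: the boosted pieces can be reassembled in $\ell^2$ orthogonally, and the unimodular boost factors and spatial translations leave each summand's $M_{4,2}$ norm unchanged. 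Summing the contributions and using the Cauchy--Schwarz structure of the $\ell^2$ decomposition should then recover $\| P_N u_0 \|_{M_{4,2}}$ and $\| P_K v_0 \|_{M_{4,2}}$ cleanly, completing the argument. The delicate point throughout is ensuring that no logarithmic or $\varepsilon$-loss accumulates from the overlap of boosted tiles, which I would handle by the almost-orthogonality inherent in the isometric decomposition.
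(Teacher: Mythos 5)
Your proposal matches the paper's proof essentially step for step: decompose the high-frequency factor into frequency balls of size $K$, use almost orthogonality to reduce to a square sum over blocks, apply H\"older's inequality $\|fg\|_{L^2}\le\|f\|_{L^4}\|g\|_{L^4}$ on each block, estimate the low-frequency factor by \eqref{eq:L4LinearStrichartzEstimate} directly and each $K$-block of the high-frequency factor by \eqref{eq:L4LinearStrichartzEstimate} after a Galilean boost (each costing $K^s$), and resum via the finite overlap and the $\ell^2$-structure of $M_{4,2}$. The only cosmetic difference is your concern about transversality, which is not actually needed---the finitely overlapping output frequency supports of the block products already give the required almost orthogonality.
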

\begin{proof}
Let $(Q_{K'})_{K'}$ be a family of frequency projections to balls of size $K$ in $\R^d$ whose supports are covering $B(0,2N) \backslash B(0,N/2)$ finitely overlapping. By almost orthogonality, we find
\begin{equation*}
\begin{split}
&\quad \| P_N e^{it \Delta} u_0 P_K e^{it \Delta} v_0 \|_{L^2([0,1] \times \R^d)}^2 \\
 &\lesssim
\sum_{K'} \| P_N Q_{K'} e^{it \Delta} u_0 P_K e^{it \Delta} v_0 \|^2_{L^2([0,1] \times \R^d)} \\
&= \sum_{K'} \| P_N Q_{K'} e^{it \Delta} u_0 \|^2_{L^4([0,1] \times \R^d)} \| P_K e^{it \Delta} v_0 \|_{L^4([0,1] \times \R^d)}^2. 
\end{split}
\end{equation*} 
We apply \eqref{eq:L4LinearStrichartzEstimate} to the second factor and to the first factor after Galilean transform, which yields
\begin{equation*}
\lesssim K^{4s} \sum_{K'} \| Q_{K'} u_0 \|^2_{M_{4,2}} \| P_K v_0 \|_{M_{4,2}}^2 \lesssim K^{4s} \| P_N u_0 \|^2_{M_{4,2}} \| P_K v_0 \|_{M_{4,2}}^2.
\end{equation*}
The ultimate estimate follows by the finitely overlapping property and the definition of the modulation spaces.
\end{proof}
This yields Proposition \ref{prop:BilinearRefinement} by Theorem \ref{thm:LinearSmoothing}.
In the next step we use the transfer principle to derive an estimate for $V^2_\Delta M_{4,2}$-functions.
\begin{proposition}
Let $K,N \in 2^{\mathbb{N}_0}$ and $1 \leq K \ll N$. Suppose that \eqref{eq:L4LinearStrichartzEstimate} holds. Then, we find the following estimate to hold:
\begin{equation}
\label{eq:V2BilinearEstimate}
\| P_N u P_K v \|_{L^2_{t,x}([0,1] \times \R^d)} \lesssim K^{2s} \| P_N u \|_{V^2_\Delta M_{4,2}} \| P_K v \|_{V^2_\Delta M_{4,2}}.
\end{equation}
\end{proposition}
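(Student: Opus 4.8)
The plan is to obtain \eqref{eq:V2BilinearEstimate} from the free-solution bound of Proposition \ref{prop:BilinearEstimate} in two stages: a transfer to the atomic space $U^2_\Delta M_{4,2}$, followed by an upgrade to the larger space $V^2_\Delta M_{4,2}$. Abbreviating the bilinear form by $T(F,G) = P_N F \cdot P_K G$, Proposition \ref{prop:BilinearEstimate} furnishes the bound for free evolutions $F = e^{it\Delta} u_0$, $G = e^{it\Delta} v_0$ with constant $K^{2s}$, and $T$ is bilinear; this is precisely the situation handled by the transfer principle of the $U^p$-/$V^p$-calculus.

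First I would carry out the transfer to $U^2_\Delta$. Given two $U^2_\Delta M_{4,2}$-atoms, I decompose them into their constituent steps; on every interval of the common refinement both $P_N u$ and $P_K v$ coincide with free Schr\"odinger evolutions, so the square of the local $L^2([0,1] \times \R^d)$-norm is dominated, via Proposition \ref{prop:BilinearEstimate}, by the corresponding global bound. Summing these squares over the refinement, the two factors decouple precisely because the atomic profiles are $\ell^2$-summable --- this is where the exponent $2$ is forced by the $L^2$-target --- and one arrives, with no loss in the constant, at
\begin{equation*}
\| P_N u \, P_K v \|_{L^2([0,1] \times \R^d)} \lesssim K^{2s} \| P_N u \|_{U^2_\Delta M_{4,2}} \| P_K v \|_{U^2_\Delta M_{4,2}}.
\end{equation*}
The passage from atoms to arbitrary elements is then the routine triangle inequality in the atomic norm.

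The essential difficulty is the final upgrade from $U^2_\Delta$ to $V^2_\Delta$. Because $U^2 \hookrightarrow V^2$, the $V^2_\Delta$-norm is \emph{weaker} than the $U^2_\Delta$-norm, so \eqref{eq:V2BilinearEstimate} is genuinely stronger than the estimate just proven and cannot be reached by an embedding; moreover the atomic computation above breaks down for exponents $p > 2$, since the step profiles are then no longer uniformly $\ell^2$-summable. I would therefore appeal to the bilinear interpolation lemma of the $U^p$-/$V^p$-theory (cf. \cite[Proposition~2.20]{HadacHerrKoch2009}, \cite{KochTataruVisan2014}), which upgrades a $U^2_\Delta \times U^2_\Delta \to L^2$ bound to the corresponding $V^2_\Delta \times V^2_\Delta \to L^2$ bound; the orthogonality carried by the $L^2$-target is what prevents the logarithmic loss otherwise present in the general multilinear interpolation, so that the constant $K^{2s}$ is preserved. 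I expect this interpolation step, rather than the transfer, to be the main obstacle.
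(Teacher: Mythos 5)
Your first step, the bilinear transfer to $U^2_\Delta$, is correct: on the common refinement of the two atoms' partitions both factors are free evolutions, Proposition \ref{prop:BilinearEstimate} applies on each interval, and the double sum factors because the steps are $\ell^2$-normalized. The fatal gap is the second step. There is no lemma that upgrades a $U^2_\Delta \times U^2_\Delta \to L^2$ bound, by itself, to a $V^2_\Delta \times V^2_\Delta \to L^2$ bound. The interpolation result you invoke (\cite[Proposition~2.20]{HadacHerrKoch2009}) requires \emph{two} inputs: a bound on $U^2$ with constant $C_2$ \emph{and} a bound on $U^q$ for some $q>2$ with constant $C_q \geq C_2$; its conclusion is a $V^2_{rc}$ bound with constant of the form $C_2\big(1+\ln (C_q/C_2)\big)$ per factor. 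A $U^2$ bound alone can never control $V^2$, since $U^2 \subsetneq V^2_{rc,-}$ is a strict inclusion --- this is exactly the obstruction you correctly identified before citing the lemma, and the ``orthogonality of the $L^2$ target'' plays no role in that lemma and cannot substitute for the missing second estimate. As written, your argument proves the $U^2_\Delta$ version of \eqref{eq:V2BilinearEstimate} and then asserts the conclusion.

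The missing ingredient is a companion bound at an exponent $q>2$, and once you have it no interpolation is needed at all, because $V^2_\Delta \hookrightarrow U^q_\Delta$ for $q>2$. This is how the paper argues, bypassing the bilinear transfer entirely: decompose $P_N u = \sum_{K'} Q_{K'}P_N u$ into pieces with frequency support in balls of radius $K$ (almost orthogonality), apply H\"older in the form $\| Q_{K'}P_N u \, P_K v \|_{L^2} \leq \| Q_{K'}P_N u \|_{L^4} \| P_K v \|_{L^4}$, and transfer the \emph{linear} estimate \eqref{eq:L4LinearStrichartzEstimate} to $U^4_\Delta M_{4,2}$-atoms. The point, special to modulation spaces on $\R^d$, is Galilean invariance: since $Q_{K'}P_N u$ has frequency support in a ball of radius $\sim K$, a Galilean shift recenters it at the origin, so the linear $L^4$ estimate costs only $K^s$ rather than $N^s$; the low-frequency factor likewise costs $K^s$. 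Thus the $U^4_\Delta \times U^4_\Delta$ bound already carries the same constant $K^{2s}$ as your $U^2$ bound, the embedding $V^2_\Delta \hookrightarrow U^4_\Delta$ yields \eqref{eq:V2BilinearEstimate}, and the square sum over $K'$ is performed at the end. The two-estimate interpolation with its logarithmic loss is genuinely needed in settings where the $U^q$ bound is strictly worse than the $U^2$ bound (this is the situation of \cite{KillipVisan2016,HadacHerrKoch2009} you appear to be recalling); here it is both unnecessary and, in the single-input form you propose, unavailable.
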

\begin{proof}
By almost orthogonality, we can write
\begin{equation*}
\| P_N u P_K v \|_{L^2}^2 \lesssim \sum_{K'} \| Q_{K'} P_N u P_K v \|^2_{L^2_{t,x}([0,1] \times \R^d)}
\end{equation*}
with $(Q_{K'})_{K'}$ like above. We apply H\"older's inequality to find
\begin{equation*}
\lesssim \sum_{K'} \| Q_{K'} P_N u \|^2_{L^4_{t,x}([0,1] \times \R^d)} \| P_K v \|^2_{L^4_{t,x}([0,1] \times \R^d)}.
\end{equation*}
We write $P_K v = \sum_m a_m g_m$ with $g_m$ a $U^4_\Delta M_{4,2}$-atom:
\begin{equation*}
g_m = \sum_j 1_{I_j^m} e^{it \Delta} f^m_j, \quad \sum_j \| f_j^m \|^4_{M_{4,2}} = 1.
\end{equation*}
Consequently,
\begin{equation*}
\begin{split}
\| P_K v \|_{L^4_{t,x}([0,1] \times \R^d)} &\leq \sum_m |a_m| \| P_K g_m \|_{L^4_{t,x}([0,1] \times \R^d)} \\
&\leq \sum_m |a_m| \big( \sum_j \| P_K e^{it \Delta} f_j^m \|^4_{L^4_{t,x}(I_j^m \times \R^d)} \big)^{\frac{1}{4}} \\
&\lesssim \sum_m |a_m| \big( \sum_j \| f_j^m \|^4_{M_{4,2}} \big)^{\frac{1}{4}} \\
&\lesssim K^s \sum_m |a_m| \lesssim K^s (1+\varepsilon) \| P_K v \|_{U^4_\Delta M_{4,2}}
\end{split}
\end{equation*}
for any $\varepsilon >0$ by choice of $(a_m) \in \ell^1$. Likewise, by an additional Galilean transform, we find
\begin{equation*}
\| Q_{K'} P_N u \|_{L^4_{t,x}([0,1] \times \R^d)} \lesssim K^s \| Q_{K'} P_N u \|_{U^4_\Delta M_{4,2}}.
\end{equation*}
We use the embedding $V^2_\Delta \hookrightarrow U^4_\Delta$ and carry out the square sum over $K'$ to find
\begin{equation*}
\begin{split}
&\quad \sum_{K'} \| Q_{K'} P_N u \|^2_{L^4_{t,x}([0,1] \times \R^d)} \| P_K v \|^2_{L^4_{t,x}([0,1] \times \R^d)} \\
&\lesssim K^{4s} \sum_{K'} \| Q_{K'} P_N u \|^2_{U^4_\Delta M_{4,2}} \| P_K v \|^2_{U^4_\Delta M_{4,2}} \\
&\lesssim K^{4s} \sum_{K'} \| Q_{K'} P_N u \|^2_{V^2_\Delta M_{4,2}} \| P_K v \|^2_{V^2_\Delta M_{4,2}} \\
&\lesssim K^{4s} \| P_N u \|^2_{V^2_\Delta M_{4,2}} \| P_K v \|^2_{V^2_\Delta M_{4,2}}.
\end{split}
\end{equation*}
The proof is complete.
\end{proof}

\section{Local well-posedness in modulation spaces}
\label{section:ProofEnergyCritical}

This section is devoted to the proof of Theorems \ref{thm:AlmostL2Critical} and \ref{thm:LWPEnergyCriticalNLS}. We begin with the proof of Theorem \ref{thm:AlmostL2Critical}, which is carried out via linear Strichartz estimates (cf. \cite[Theorem~1.2]{KeelTao1998}).

\begin{proof}[{Proof~of~Theorem~\ref{thm:AlmostL2Critical}}]
We give the proof of (1) in detail. The key ingredients are still like in \cite{Schippa2022} smoothing and Strichartz estimates. Let $u_0 = f_1 + f_2$ with $f_1 \in M^s_{6,2}(\R)$ and $f_2 \in L^2(\R)$. Then, Theorem \ref{thm:LinearSmoothing} yields
\begin{equation*}
\| U(t) f_1 \|_{L^6([0,T],L^6(\R))} \lesssim \langle T \rangle^{\frac{1}{6}} \| f_1 \|_{M^s_{6,2}(\R)}
\end{equation*}
and by Strichartz estimates we find
\begin{equation*}
\| U(t) f_2 \|_{L^6([0,T],L^6(\R))} \lesssim \| f_2 \|_{L^2(\R)}.
\end{equation*}
Furthermore, since $U(t) (L^2(\R) + M^s_{6,2}(\R)) = L^2(\R) + M^s_{6,2}(\R)$, we find
\begin{equation*}
\| U(t) u_0 \|_{L^\infty([0,T],L^2(\R) + M^s_{6,2}(\R))} \lesssim \|u_0 \|_{L^2(\R) + M^s_{6,2}(\R)}.
\end{equation*}
The nonlinear estimate is concluded by the inhomogeneous Strichartz estimates
\begin{equation*}
\begin{split}
\big\| \int_0^t e^{i(t-s) \Delta} (|u|^4 u)(s) ds \big\|_{L^6([0,T],L^6(\R))} &\lesssim \| |u|^4 u \|_{L^{6/5}([0,T],L^{6/5}(\R))} \\
 &\lesssim \| u \|^5_{L^6([0,T],L^6(\R))}.
 \end{split}
\end{equation*}
Similarly,
\begin{equation*}
\begin{split}
\quad \big\| \int_0^t e^{i(t-s) \Delta} (|u|^4 u)(s) ds \big\|_{L^\infty([0,T],L^2(\R))} &\lesssim \| |u|^4 u \|_{L^{6/5}([0,T],L^{6/5}(\R))} \\
&\lesssim \| u \|^5_{L^6([0,T],L^6(\R))}.
\end{split}
\end{equation*}
This finishes the proof of (1). The difference with the cubic NLS on $\R$ analyzed in \cite{Schippa2022} is that we cannot afford to apply H\"older's inequality in time. This gives the small data constraint. Regarding the claim (2), we note that in two dimensions, $p=q=4$ are sharp Strichartz indices and by Theorem \ref{thm:LinearSmoothing} we have the smoothing estimate
\begin{equation*}
\| U(t) f \|_{L^4([0,T],L^4(\R^2))} \lesssim \langle T \rangle^{\frac{1}{4}} \| f \|_{M^s_{4,2}(\R^2)}
\end{equation*}
for $s > 0$.
\end{proof}
We turn to the proof of Theorem \ref{thm:LWPEnergyCriticalNLS} in earnest.
As iteration space, we consider $X^s = \ell^2_N U^2_\Delta M^s_{4,2}$ for $s>1$ (cf. Section \ref{section:Notations}). We have for the norm
\begin{equation*}
\| u \|_{X^s} = \big( \sum_N N^{2s} \| P_N u \|^2_{U^2_\Delta M_{4,2}} \big)^{\frac{1}{2}}.
\end{equation*}
We let furthermore
\begin{equation*}
\| v \|_{Y^s} = \big( \sum_N N^{2s} \| P_N u \|^2_{V^2_\Delta M_{4,2}} \big)^{\frac{1}{2}}
\end{equation*}
and have the embedding $X^s \hookrightarrow Y^s$.

 With bilinear estimates in adapted function spaces like in \cite{KillipVisan2016} available, we can apply the arguments from \cite{KillipVisan2016} to prove Theorem \ref{thm:LWPEnergyCriticalNLS}.
We have the following analog of \cite[Proposition~4.1]{KillipVisan2016}:
\begin{proposition}
\label{prop:NonlinearEstimates}
Let $d \in \{3, 4\}$, $s > 1$, and $F(u) = \pm |u|^{\frac{4}{d-2}} u$. Then, for any $0 < T \leq 1$, we find the following estimates to hold:
\begin{equation}
\label{eq:NonlinearEstimateI}
\big\| \int_0^t e^{i(t-s) \Delta} F(u(s)) ds \big\|_{X^s([0,T])} \lesssim \| u \|^{\frac{d+2}{d-2}}_{X^s([0,T])}
\end{equation}
and
\begin{equation}
\label{eq:NonlinearEstimateII}
\begin{split}
&\quad \big\| \int_0^t e^{i(t-s) \Delta} [F(u+w)(s) - F(u(s))] ds \big\|_{X^s([0,T])} \\
&\lesssim \| w \|_{X^s([0,T])} \big( \| u \|_{X^s([0,T])} + \| w \|_{X^s([0,T])} \big)^{\frac{4}{d-2}}.
\end{split}
\end{equation}
The implicit constants do not depend on $T$.
\end{proposition}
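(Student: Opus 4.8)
The plan is to dualize the Duhamel term against the $V^2$-space, reduce both estimates to a single multilinear spacetime bound, and then control that bound by pairing the two highest frequencies through the bilinear estimate \eqref{eq:V2BilinearEstimate} and distributing the remaining factors over Strichartz norms.

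First I would record the energy identity underlying the $U^2$-theory: if $w(t) = \int_0^t e^{i(t-\tau)\Delta} G(\tau)\, d\tau$, then $e^{-it\Delta} w$ is the antiderivative of $e^{-it\Delta} G$, so $\|P_N w\|_{U^2_\Delta M_{4,2}} = \|P_N G\|_{DU^2_\Delta M_{4,2}}$. Using the duality $(DU^2_\Delta M_{4,2})^* \simeq V^2_\Delta M_{4/3,2}$ from Theorem \ref{thm:UpVpDuality} together with $\ell^2_N$-duality for the weight $N^s$, the left-hand side of \eqref{eq:NonlinearEstimateI} is comparable to
\[
\sup \Big| \sum_N \int_0^T\!\!\int_{\R^d} P_N F(u)\, \overline{P_N v}\, dx\, dt \Big|,
\]
the supremum taken over $v$ with $\big(\sum_N N^{-2s} \|P_N v\|_{V^2_\Delta M_{4/3,2}}^2\big)^{1/2} \le 1$. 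A point worth isolating at the outset is the value-space mismatch: the test function lives in $M_{4/3,2}$, whereas \eqref{eq:V2BilinearEstimate} is phrased in $M_{4,2}$. Since each $P_N v$ is frequency-localized and the modulation-space blocks live at unit scale, the embedding $M_{4/3,2} \hookrightarrow M_{4,2}$ (i.e.\ $M_{p_1,q}\hookrightarrow M_{p_2,q}$ for $p_1 \le p_2$) costs no derivatives, so I may treat $v$ as an element of the $N^{-s}$-weighted space $\ell^2_N V^2_\Delta M_{4,2}$ and feed it into the bilinear estimate.

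Next I would expand $F(u) = |u|^{4/(d-2)} u$ as a product of $\tfrac{d+2}{d-2}$ copies of $u$ and $\bar u$, Littlewood--Paley decompose every factor and $v$, and use the convolution constraint on the Fourier supports to force the two largest frequencies to be comparable; by the symmetry of the factors it then suffices to treat one ordered configuration. For the cubic case $d=4$ this leaves four functions, which I split by Cauchy--Schwarz into two $L^2_{t,x}$ pairings arranged so that each pair joins a high and a low frequency; applying \eqref{eq:V2BilinearEstimate} (equivalently the refinement \eqref{eq:BilinearEstimate}, with loss $N^{(d-2)/2+\varepsilon}$ falling on the lower frequency) to both pairs places the entire derivative loss on the two smallest frequencies. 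For the quintic case $d=3$ there are six functions; here I would use one bilinear pairing for the dangerous high-frequency interaction and distribute the remaining factors over Strichartz norms, exploiting the full family of $L^p$-smoothing estimates from Theorem \ref{thm:LinearSmoothing} (and $V^2_\Delta \hookrightarrow U^4_\Delta$ to transfer \eqref{eq:L4LinearStrichartzEstimate} and its $L^p$-analogues to the adapted spaces). In either case the output is a sum over dyadic frequencies of products of $V^2_\Delta M_{4,2}$-norms weighted by powers of the frequencies, which I would reorganize in terms of the $\ell^2$-sequences $a_N = N^s \|P_N u\|_{V^2_\Delta M_{4,2}}$ and $b_N = N^{-s}\|P_N v\|_{V^2_\Delta M_{4,2}}$ and sum by Schur's test and Young's inequality.

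The main obstacle is the frequency summation at the critical regularity. The bilinear gain $N^{(d-2)/2+\varepsilon}$ and the Strichartz losses $N^{2 s_{\text{dec}}(p,d)+\varepsilon}$ are all borderline against the weight $s=1$, and closing the geometric series forces $s>1$ with the decoupling loss $\varepsilon$ chosen small relative to $s-1$. The most delicate configuration is the one in which $v$ and a single copy of $u$ carry comparable high frequencies while all remaining factors are low: there the weights $N^{-s}$ and $N^{+s}$ cancel, so every derivative loss on this pair is unabsorbed, and one must instead exploit the temporal orthogonality built into the $U^2/V^2$ bilinear estimate (via the Galilean reduction behind Proposition \ref{prop:BilinearEstimate}) to see that this interaction is non-resonant. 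Once \eqref{eq:NonlinearEstimateI} is established, \eqref{eq:NonlinearEstimateII} follows by the identical scheme applied to the telescoping expansion of $F(u+w)-F(u)$, each of whose terms carries at least one factor of $w$, yielding the prefactor $\|w\|_{X^s([0,T])}$ and the stated homogeneity; independence of the implicit constants from $T \le 1$ is automatic since all estimates are performed on the full line before restriction.
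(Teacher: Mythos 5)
Your overall architecture --- dualizing the Duhamel term via $(DU^2_\Delta M_{4,2})^* \simeq V^2_\Delta M_{4/3,2}$, passing to the weaker $M_{4,2}$-based norm of the test function through $M_{4/3,2} \hookrightarrow M_{4,2}$, Littlewood--Paley decomposing so that the two highest frequencies are comparable, and closing a weighted dyadic sum --- is exactly the paper's scheme, and your $d=4$ treatment (two high-low $L^2_{t,x}$ pairings, each estimated by \eqref{eq:V2BilinearEstimate} so that all derivative loss falls on the two lowest frequencies) is precisely the Killip--Vi\c{s}an argument the paper invokes. The genuine gap is your plan for $d=3$: one bilinear pairing plus Strichartz norms on the remaining four factors cannot close the estimate. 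In the configuration $N_0 \sim N_1 \geq N_2 \geq \dots \geq N_5$, the weight $N_0^{s}$ released by $v \in Y^{-s}$ cancels the weight $N_1^{-s}$ carried by $u^{(1)}_{N_1}$, so \emph{neither} high-frequency factor can absorb any positive power of its own frequency: if a factor $N_1^{\delta}$ with $\delta>0$ survives, then $\sum_{N_0 \sim N_1} N_1^{\delta}\, \big(N_0^{-s}\|v_{N_0}\|_{V^2_\Delta M_{4,2}}\big)\big(N_1^{s}\|u^{(1)}_{N_1}\|_{V^2_\Delta M_{4,2}}\big)$ is not bounded by $\|v\|_{Y^{-s}}\|u^{(1)}\|_{Y^s}$ (spread the $\ell^2$-mass over $K$ dyadic scales: the left side grows like $2^{\delta K}$, the right side like $K$). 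A single bilinear pairing protects only one of $v_{N_0}$, $u^{(1)}_{N_1}$. The other must then go into one of the $L^p_{t,x}$ smoothing estimates of Theorem \ref{thm:LinearSmoothing}, all of which lose $N_1^{2s_{\text{dec}}(p,3)+\varepsilon}$; this is a genuinely positive power for $p > 10/3$, and even at $p=10/3$ the residual $N_1^{\varepsilon}$ already destroys the summation by the counterexample above. Placing that factor in $L^2_{t,x}$ or $L^\infty_t L^2_x$ instead is impossible, since elements of $M_{4,2}$ need not lie in $L^2$. Consequently the ``most delicate configuration'' you correctly single out is not resolved by your scheme, and the appeal to ``temporal orthogonality / non-resonance'' is not a substitute: no such mechanism is available (or needed) in the paper's toolkit.

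The correct resolution, which is what the paper does, is to use \emph{two} bilinear high-low pairings also when $d=3$: in the case $N_0 \sim N_1$ one pairs $(v_{N_0}, u^{(2)}_{N_2})$ and $(u^{(1)}_{N_1}, u^{(3)}_{N_3})$, and in the case $N_0 \lesssim N_1 \sim N_2$ one pairs $(v_{N_0}, u^{(1)}_{N_1})$ and $(u^{(2)}_{N_2}, u^{(3)}_{N_3})$; the two remaining lowest-frequency factors are then estimated in $L^\infty_{t,x}$ via the Bernstein-type bound $\| P_N f \|_{L_x^\infty} \lesssim \| P_N f \|_{M_{4,1}} \lesssim N^{\frac{3}{2}} \| P_N f \|_{M_{4,2}}$, an ingredient absent from your proposal. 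After reweighting, the losses $N_2^{1/2+}$, $N_3^{1/2+}$, $N_4^{3/2}$, $N_5^{3/2}$ sit only on frequencies whose weights $N_i^{-s}$, $s>1$, render the cascaded dyadic sums convergent (the mild positive exponents left on $N_4$, $N_5$ are absorbed when summed up against the negative exponents of $N_3$, $N_2$), and the final sum over $N_0 \sim N_1$ is handled by Cauchy--Schwarz in $\ell^2$. In short, your $d=4$ scheme, augmented by two Bernstein $L^\infty_{t,x}$ factors, \emph{is} the correct $d=3$ proof; the asymmetric one-pairing scheme you propose for $d=3$ fails exactly at the critical high-high summation.
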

\begin{proof}
We only have to prove \eqref{eq:NonlinearEstimateII} because \eqref{eq:NonlinearEstimateI} is a special case. By duality, it is enough to show
\begin{equation*}
\begin{split}
&\quad \big| \int_0^T \int_{\R^d} [F(u+w)(t) - F(u)(t)] v(t,x) dx dt \big| \\
 &\lesssim \| v \|_{Y^{-s}([0,T])} \| u \|_{X^s([0,T])} \big( \| u \|_{X^s([0,T])} + \| w \|_{X^s([0,T])} \big)^{\frac{4}{d-2}}.
\end{split}
\end{equation*}
For the above display, it is enough to show
\begin{equation}
\label{eq:FrequencyLocalizedMultilinearEstimate}
\begin{split}
&\quad \sum_{N_0 \geq 1} \sum_{N_1 \geq \ldots \geq N_{\frac{d+2}{d-2}} \geq 1} \big| \int_0^T \int_{\R^d} v_{N_0}(t,x) \prod_{j=1}^{\frac{d+2}{d-2}} u_{N_j}^{(j)}(t,x) dx dt \big| \\
&\lesssim \| v \|_{Y^{-s}} \prod_{j=1}^{\frac{d+2}{d-2}} \| u^{(j)} \|_{X^s([0,T])}.
\end{split}
\end{equation}
The proof of \eqref{eq:FrequencyLocalizedMultilinearEstimate} follows from linear and bilinear Strichartz estimates combined with Bernstein's inequality. We shall only show the variant of the Killip--Vi\c{s}an argument for $d=3$ to avoid redundancy. In the following let $\varepsilon = s-1 > 0$.\\
\textbf{Case I:} $d=3$. By Littlewood--Paley theory, the two highest frequencies have to be comparable.\\
\textbf{Case I.1:} $N_0 \sim N_1 \geq \ldots \geq N_5$: We apply Proposition \ref{prop:BilinearEstimate} to $v_{N_0} u_{N_2}^{(2)}$ and $u_{N_1}^{(1)} u_{N_3}^{(3)}$ and estimate the remaining factors in $L_{t,x}^\infty$. The estimate in $L^\infty_{t,x}$ is not a local smoothing estimate, but due to Bernstein's and the Cauchy-Schwarz inequality:
\begin{equation*}
\| P_N f \|_{L_x^\infty} \lesssim \| P_N f \|_{M_{\infty,1}} \lesssim \| P_N f \|_{M_{4,1}} \lesssim N^{\frac{3}{2}} \| P_N f \|_{M_{4,2}}.
\end{equation*}

 We write $\mathcal{N}_1 = \{(N_0,N_1,\ldots,N_5) : N_0 \sim N_1 \geq \ldots \geq N_5 \}$ for brevity. The estimates yield
\begin{equation*}
\begin{split}
&\quad \sum_{\mathcal{N}_1} \big| \int_0^T \int_{\R^d} v_{N_0}(t,x) u_{N_1}^{(1)}(t,x) \ldots u_{N_5}^{(5)}(t,x) dx dt \big| \\
&\lesssim \sum_{\mathcal{N}_1} \| v_{N_0} u_{N_2}^{(2)} \|_{L^2_{t,x}} \|u_{N_1}^{(1)} u_{N_3}^{(3)} \|_{L^2_{t,x}} \| u_{N_4}^{(4)} \|_{L^\infty_{t,x}} \| u_{N_5}^{(5)} \|_{L^\infty_{t,x}} \\
&\lesssim \sum_{\mathcal{N}_1} N_2^{\frac{1}{2}+\frac{\varepsilon}{2}} N_3^{\frac{1}{2}+\frac{\varepsilon}{2}} N_4^{\frac{3}{2}} N_5^{\frac{3}{2}} \| v_{N_0} \|_{V^2_\Delta M_{4,2}} \prod_{i=1}^5 \| u_{N_i}^{(i)} \|_{V^2_\Delta M_{4,2}} \\
&\lesssim \| v \|_{Y^{-s}} \prod_{i=1}^5 \| u^{(j)} \|_{Y^s}.
\end{split}
\end{equation*}
By the embedding $X^s \hookrightarrow Y^s$ the proof of Case I.1 is complete.\\
\textbf{Case I.2:} $N_0 \lesssim N_1 \sim N_2 \geq N_3 \geq N_4 \geq N_5$. Denote the summation set with $\mathcal{N}_2$. We apply two bilinear estimates to $v_{N_0} u_{N_1}^{(1)}$ and $u_{N_2}^{(2)} u_{N_3}^{(3)}$ and $L^\infty_{t,x}$-estimates to the other factors to find
\begin{equation*}
\begin{split}
&\quad \sum_{\mathcal{N}_2} \big| \int_0^T \int_{\R^d} v_{N_0}(t,x) u_{N_1}^{(1)}(t,x) \ldots u_{N_5}^{(5)}(t,x) dx dt \big| \\
&\lesssim \sum_{\mathcal{N}_2} \| v_{N_0} u_{N_1}^{(1)} \|_{L^2_{t,x}} \| u_{N_2}^{(2)} u_{N_3}^{(3)} \|_{L^2_{t,x}} \| u_{N_4}^{(4)} \|_{L^\infty_{t,x}} \| u_{N_5}^{(5)} \|_{L^\infty_{t,x}} \\
&\lesssim \sum_{\mathcal{N}_2} N_0^{\frac{1}{2}+\frac{\varepsilon}{2}} N_3^{\frac{1}{2}+\frac{\varepsilon}{2}} N_4^{\frac{3}{2}} N_5^{\frac{3}{2}} \| v_{N_0} \|_{V^2_\Delta M_{4,2}} \prod_{i=1}^5 \| u_{N_i}^{(i)} \|_{V^2_\Delta M_{4,2}} \\
&\lesssim \sum_{\mathcal{N}_2} \frac{N_0^{\frac{3}{2}+\frac{3 \varepsilon}{2}} N_4^{\frac{1}{2}-\varepsilon} N_5^{\frac{1}{2}-\varepsilon}}{N_1^s N_2^s N_3^{\frac{s}{2}}} \| v_{N_0} \|_{Y^{-s}} \prod_{i=1}^5 \| u_{N_i}^{(i)} \|_{Y^s} \\
&\lesssim \| v \|_{Y^{-s}} \prod_{i=1}^5 \| u^{(i)} \|_{Y^s}.
\end{split}
\end{equation*}
This finishes the proof of Case $I$. For the details of the proof of Case $II$ for $d=4$ we refer to \cite{KillipVisan2016}.
\end{proof}
We can complete the proof of Theorem \ref{thm:LWPEnergyCriticalNLS} along the lines of \cite{HerrTataruTzvetkov2011,KillipVisan2016} with Proposition \ref{prop:NonlinearEstimates} at hand.
\begin{proof}[Proof of Theorem \ref{thm:LWPEnergyCriticalNLS}]
For small initial data we can construct a solution on $[0,1]$ by showing that
\begin{equation*}
\Phi(u)(t) := e^{it \Delta} u_0 \mp i \int_0^t e^{i(t-s) \Delta} F(u(s)) ds
\end{equation*}
is a contraction mapping within
\begin{equation*}
B = \{ u \in X^s([0,1]) \cap C_t([0,1],M_{4,2}^s(\R^d)) \, : \, \| u \|_{X^s} \leq 2 \eta \}
\end{equation*}
endowed with $d(u,v):= \| u- v \|_{X^s([0,1])}$. This is a consequence of Proposition \ref{prop:NonlinearEstimates} by observing that $\Phi$ maps $B$ into itself by \eqref{eq:NonlinearEstimateI} and $\Phi$ is indeed contracting by \eqref{eq:NonlinearEstimateII}. This proves Theorem \ref{thm:LWPEnergyCriticalNLS} for small data.

\medskip

For large initial data, we argue with a frequency cutoff. Let $u_0 \in M^s_{4,2}(\R^d)$ with
\begin{equation*}
\| u_0 \|_{M^s_{4,2}(\R^d)} \leq A
\end{equation*}
for some $0<A < \infty$. We consider
\begin{equation*}
B= \{ u \in X^s([0,T]) \cap C_t([0,T],M^s_{4,2}(\R^d)) \, : \, \| u \|_{X^s([0,T])} \leq 2A, \quad \| u_{>N} \|_{X^s([0,T])} \leq 2 \delta \}
\end{equation*}
under the metric $d(u,v) := \| u - v \|_{X^s([0,T])}$.\\
First, we see that $\Phi$ indeed maps $B$ to itself:
\begin{equation*}
\begin{split}
\| \Phi(u) \|_{X^s} &\leq \| e^{it \Delta} u_0 \|_{X^s} + \big\| \int_0^t e^{i(t-s) \Delta} F(u_{\leq N}(s)) ds \big\|_{X^s} \\
&\quad + \big\| \int_0^t e^{i(t-s)\Delta} [F(u)(s) - F(u_{\leq N})(s))] ds \big\|_{X^s} \\
&\leq \| u_0 \|_{M^s_{4,2}} + C \| F(u_{\leq N}) \|_{L_t^1 M^s_{4,2}} + C \| u_{\geq N} \|_{X^s} \| u \|^{\frac{4}{d-2}}_{X^s} \\
&\leq A + CT \| u_{\leq N} \|_{L_t^\infty M^s_{4,2}} \| u_{\leq N} \|^{\frac{4}{d-2}}_{L_t^\infty M^s_{\infty,1}} + C (2 \delta) (2A)^{\frac{4}{d-2}} \\
&\leq A + CT N^{\frac{6}{d-2}} (2A)^{\frac{d+2}{d-2}} + C(2 \delta) (2A)^{\frac{4}{d-2}} \leq 2A
\end{split}
\end{equation*}
provided $\delta$ is chosen small enough depending on $A$, and $T$ is chosen small enough depending on $A$ and $N$.

Next, we decompose $F(u) = F_1(u) + F_2(u)$, where
\begin{equation*}
F_1(u) = O(u_{>N}^2 u^{\frac{6-d}{d-2}}) \text{ and } F_2(u) = O(u^{\frac{4}{d-2}}_{\leq N} u).
\end{equation*}
We estimate with the H\"older-like inequality for modulation spaces (cf. \cite[Theorem~4.3]{Chaichenets2018})
\begin{equation*}
\begin{split}
&\quad \| P_{> N} \Phi(u) \|_{X^s} \\
&\leq \| e^{it \Delta} P_{>N} u_0 \|_{X^s} + \big\| \int_0^t e^{i(t-s) \Delta} F_1(u(s)) ds \big\|_{X^s} \\ 
&\quad + \big\| \int_0^t e^{i(t-s) \Delta} F_2(u(s)) ds \big\|_{X^s} \\
&\leq \|P_{> N} u_0 \|_{M^s_{4,2}(\R^d)} + C \| u_{>N} \|_{X^s}^2 \| u \|^{\frac{6-d}{d-2}}_{X^s} + C \| F_2(u) \|_{L_t^1 M^s_{4,2}} \\
&\leq \delta + C(2 \delta) (2 A)^{\frac{6-d}{d-2}} + CT \| u \|_{L_t^\infty M^s_{4,2}} \| u_{\leq N} \|_{L_t^\infty M^s_{\infty,1}}^{\frac{2d}{d-2}} \\
&\leq \delta + C(2 \delta) (2 A)^{\frac{6-d}{d-2}} + CT N^{\frac{2d}{d-2}} (2A)^{\frac{d+2}{d-2}}.
\end{split}
\end{equation*}
We can bound the above by $2 \delta$ provided that $\delta$ is chosen small enough depending on $A$, and $T$ is chosen small enough depending on $A$, $\delta$, and $N$.\\
Next, we prove that $\Phi$ is a contraction. We decompose like above $F=F_1+F_2$ and observe
\begin{equation*}
F_1(u) - F_1(v) = O((u-v) (u_{>N} - v_{>N}) (u^{\frac{6-d}{d-2}} + v^{\frac{6-d}{d-2}}))
\end{equation*}
and
\begin{equation*}
F_2(u) - F_2(v) = O((u-v)(u_{\leq N} + v_{\leq N})^{\frac{4}{d-2}}) + O((u_{\leq N} - v_{\leq N}) (u+v) (u_{\leq N} + v_{\leq N})^{\frac{6-d}{d-2}} ).
\end{equation*}
By the above arguments for $u,v \in B$:
\begin{equation*}
\begin{split}
&\quad d(\Phi(u),\Phi(v)) \\
&\lesssim \| u-v \|_{X^s} ( \| u_{>N} \|_{X^s} + \| v_{>N} \|_{X^s}) (\| u \|_{X^s} + \| v \|_{X^s})^{\frac{6-d}{d-2}} \\
&\quad + \| F_2(u) - F_2(v) \|_{L_t^1 M^s_{4,2}} \\
&\lesssim (4 \delta) (4A)^{\frac{6-d}{d-2}} d(u,v) + T \| u-v \|_{L_t^\infty M^s_{4,2}} (\| u_{\leq N} \|_{L_t^\infty M^s_{\infty,1}} + \| v_{\leq N} \|_{L_t^\infty M^s_{\infty,1}})^{\frac{4}{d-2}} \\
&\quad + T (\| u \|_{L_t^\infty M^s_{4,2}} + \| v \|_{L_t^\infty M^s_{4,2}}) \| u_{\leq N} - v_{\leq N} \|_{L_t^\infty M^s_{\infty,1}} \\
&\qquad \times \big( \| u_{\leq N} \|_{L_t^\infty M^s_{\infty,1}} + \| v_{\leq N} \|_{L_t^\infty M^s_{\infty,1}} \big)^{\frac{6-d}{d-2}} \\
&\lesssim [(4\delta) (4A)^{\frac{6-d}{d-2}} + TN^{\frac{4d}{d-2}} (4A)^{\frac{4}{d-2}}] d(u,v) \leq \frac{1}{2} d(u,v),
\end{split}
\end{equation*}
provided $\delta$ is chosen small enough depending on $A$, and $T$ is chosen small enough depending on $A$ and $N$. This yields uniqueness and analytic dependence of the data-to-solution mapping within $B$. To infer uniqueness in $X^s([0,T]) \cap C_t([0,T],M^s_{4,2}(\R^d))$, we can compare two solutions for the same initial data through a common frequency cutoff chosen high enough. Then, we find these to be coinciding in a ball and hence in $X^s([0,T]) \cap C_t([0,T],M^s_{4,2}(\R^d))$.
\end{proof}

\section*{Acknowledgements}

I would like to thank Yufeng Lu for pointing out an error in a previous version.
Funded by the Deutsche Forschungsgemeinschaft (DFG, German Research Foundation) -- Project-ID 258734477 -- SFB 1173.

\end{document}